 \documentclass[12pt]{amsart}

\usepackage{amssymb,latexsym}
\usepackage{amsmath,amsfonts,amsthm,amscd,amsxtra,enumerate}
\usepackage{mathtools}
\usepackage{epsfig, hyperref}
\usepackage[margin=1in]{geometry}
\usepackage[utf8]{inputenc}
\usepackage{graphicx}
\usepackage{tikz, wrapfig,array}

\newtheorem{con}{ Conjecture}[section]
\newtheorem{definition}[con]{ Definition}

\newtheorem{example}[con]{ Example}
\newtheorem{theorem}[con]{ Theorem}
\newtheorem{lemma}[con]{ Lemma}
\newtheorem{remark}[con]{ Remark}
\newtheorem{corollary}[con]{ Corollary}
\newtheorem{proposition}[con]{ Proposition}

\newcommand{\sk}{{\ensuremath{\sf k }}}
\newcommand{\m}{\ensuremath{\mathfrak m}}

\newcommand{\R}{\mathcal{R}}
\newcommand{\p}{\mathfrak{p}}
\newcommand{\ass}{\operatorname{Ass}}

\DeclareMathOperator{\supp}{supp}

\providecommand\reg{\text{\rm reg}}

\providecommand{\BN}{{\mathbb N}}
\providecommand{\BZ}{{\mathbb Z}}

\begin{document}

\title{Regularity of Symbolic Powers of Edge Ideals}

\author{A. V. Jayanthan}
\email{jayanav@iitm.ac.in}
\address{Department of Mathematics, I.I.T. Madras, Chennai, 600036,
INDIA.}

\author[Rajiv Kumar]{Rajiv Kumar}
\email{gargrajiv00@gmail.com}
\address{Department of Mathematics, The LNM Institute of Information
Technology, Jaipur, Rajasthan, 302031, INDIA.}


\subjclass[2010]{Primary  13D02, 13F55, 05E40}

\keywords{Castelnuovo-Mumford regularity, Edge Ideal, Symbolic Powers}

\title{Regularity of Symbolic Powers of Edge Ideals}

\begin{abstract}
In this article, we prove that for several classes of graphs, the
Castelnuovo-Mumford regularity of symbolic powers of their edge ideals
coincide with that of their ordinary powers.
\end{abstract}

\maketitle
\section{Introduction}

This article is motivated by the results in the paper \cite{GHRS}. Gu
et al. in \cite{GHRS} studied the properties and invariants associated
with symbolic powers of edge ideals of
unicyclic graphs. Let $G$ be a finite simple graph on the vertex set
$x_1, \ldots, x_n$ and $I(G)$ denote the ideal in the polynomial ring
$S = \sk[x_1, \ldots, x_n]$ generated by $\{x_ix_j \mid \{x_i, x_j\}
\text{ is an edge of } G\}$, where $\sk$ is a field. There have been
a lot of research on connection between algebraic properties of
$I(G)^s$ with the combinatorial properties of $G$, see \cite{BBH17} and
the references there in. In the geometrical context, the symbolic
powers have more importance since it captures all polynomials that
vanishes with a given multiplicity. Algebraically, the symbolic powers
are harder to compute or handle. In our situation, we can observe that
$I(G)^{(s)} = \bigcap_{\p \in \ass(I)}\p^s.$ It was proved by Simis,
Vasconcelos and Villarreal that $G$ is bipartite if and only if
$I(G)^{(s)} = I(G)^s$ for all $s \geq 1$, \cite{svv94}. It has
been conjectured by N. C. Minh that if $G$ is a finite simple graph,
then $\reg(I(G)^{(s)}) = \reg(I(G)^s)$ for all $s \geq 1$, see
\cite{GHRS}. Gu et al., in \cite{GHRS}, proved this conjecture for odd
cycles. Recently, the conjecture has been proved for the classes of
unicyclic graphs, chordal graphs and Cameron-Walker graphs by Seyed
Fakhari, \cite{SA3, SA2, SA1}. In \cite{KS19}, Kumar and Selvaraja
generalized a result of Seyed Fakhari to prove Minh's conjecture for a
class of graphs obtained by attaching complete graphs to vertices of
unicyclic graphs.

In this article, we extend some of the results in \cite{GHRS} to
prove the equality of regularity of ordinary powers with that of
symbolic powers for certain classes of graphs. Our main theorem is
stated as follows:
\vskip 2mm \noindent
\textbf{Theorem} \ref{mainTheorem}\textbf{.}
{\em Let $G$ be a graph obtained by taking clique sum of a $C_{2n+1}$ and
some bipartite graphs. Let $H$ be an
induced subgraph of $G$ on vertices $V\setminus\bigcup_{x\in
V(C_{2n+1})}N_G(x)$. Assume that none of the vertices of $H$ is part
of any cycle in $G$. If $\nu(G)-\nu(H)\geq 3$, then
$\reg\left(I^{(s)}\right)=\reg\left(I^s\right)$.}

As in \cite{GHRS}, the approach is through understanding the symbolic
power as a sum of product of ordinary powers of certain related
ideals. We use this decomposition to study the regularity of symbolic
powers of edge ideals of graphs whose each odd cycle is a dominant
odd cycle.
\vskip 2mm \noindent
\textbf{Theorem} \ref{Theorem}\textbf{.}
{\em Let $G'$ be a clique sum of $r$ cycles of size $2n+1$, say $C_1,\dots,
C_r$, and $G$ be a graph by taking the clique sum of $G'$ and some
bipartite graphs. If $N_G(C_i)=V(G)$ for any odd
cycle $C_i$ in $G$, then $\reg\left(I^{(s)}\right)=\reg\left(I^{s}\right)$ for all $s\geq 1$.
}

The article is organized as follows. We collect the required
terminologies and results in Section 2. In Section 3, we obtain the
decomposition for symbolic powers in terms of ordinary powers and use
it to prove Theorem \ref{Theorem}. In the final section, we prove
Theorem \ref{mainTheorem}.

\noindent
\textbf{Acknowledgement:} We thank Yan Gu for going through a
preliminary version of the article and making some valuable comments.
We are also thankful to the anonymous reviewer for reading the
manuscript carefully and asking pertinent questions which lead us to
finding a gap in one of the lemmas in the first version.

\section{Preliminaries}
Throughout this paper, all graphs considered are assumed to be finite
and simple. For a graph $G$ with vertex set $V(G) = \{x_1, \ldots, x_n\}$,
$S$ denotes the polynomial ring $\sk[x_1, \ldots, x_n]$ and $\m$
denotes the unique graded maximal ideal in $S$.
In this section, we recall the definitions and results that are needed
for the rest of the paper. We begin by recalling the some of the
terminologies related to finite simple graphs.
\begin{definition}{\rm Let $G$ be a graph on the vertex set $V$. Then,
\begin{enumerate}[i)]
\item set $\alpha(G):=\min\{|C|: C \text{ is a vertex cover of } G\}$;
\item the graph $G$ is called \emph{decomposable} if there
exists a partition of $V=V_1\sqcup\dots\sqcup V_r$ such that
$\sum \alpha(G_i)=\alpha(G)$, where $G_i$ is induced
subgraph of $G$ on $V_i$. If $G$ is not decomposable, then
$G$ is called \emph{indecomposable};
\item for $T \subset V$, $G \setminus T$ denote the induced
subgraph of $G$ on the vertex set $V \setminus T$;
\end{enumerate}
}\end{definition}

It was shown by Harary and Plummer, \cite{HP} that every
indecomposable contains an odd cycle.  We now recall the duplication
and parallelization.
\begin{definition}{\rm Let $G$ be a graph on $n$ vertices and ${\bf
  v}=(v_1,\dots, v_{n})\in\BN^{n}.$
	\begin{enumerate}[\rm i)]
		\item The \emph{duplication} of a vertex $x$ of $G$ is the graph obtained from $G$ by adding a vertex $x'$ and all edges $\{x',y\}$ for all $y\in N_G(x)$.
		\item The \emph{parallelization} of $G$ with respect to ${\bf
		  v}$, denoted by $G^{\bf v}$, is the graph obtained from $G$
		  by deleting $x_i$ if $v_i= 0$ and duplicating $v_i-1$ times
		  $x_i$ if $v_i\geq 1$.
	\end{enumerate}
}\end{definition}
For an ideal $I$ in a commutative ring $A$, let $\R_s(I) :=
\oplus_{n\geq 0} I^{(n)}t^n$ denote the symbolic Rees algebra of $I$.
For a vector $\mathbf{v} \in \BN^n$, let $\mathbf{x}^{\mathbf{v}}$ be
the monomial $x_1^{v_1}\cdots x_n^{v_n} \in \sk[x_1, \ldots, x_n]$.
Mart\'{i}nez-Bernal et al. obtained the $\sk$-algebra generators for
the symbolic Rees algebra:

\begin{theorem}\cite[Lemma 2.1]{BRC2011}\label{implosiveThm}
	Let  $G$ be a graph on $V$.
	Then $$\R_s(I)=\sk[x^{\bf v}t^b: G^{\bf v} \text{ is an
	indecomposable graph, } {\bf v}\in \BN^{|V|} \text{ and } b=\alpha(G^{\bf v})].$$
\end{theorem}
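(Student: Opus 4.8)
The plan is to compare the two monomial‑generated $\sk$‑subalgebras of $S[t]$ degree by degree, via the usual monomial description of symbolic powers of a squarefree monomial ideal. First I would record that $I=I(G)$ is radical, so $\ass(I)=\min(I)$, the minimal primes being the covers $\p_C=(x_i:x_i\in C)$ with $C$ a minimal vertex cover of $G$; hence $I^{(n)}=\bigcap_C\p_C^n$. For a monomial $x^{\mathbf{a}}$ with $\mathbf{a}\in\BN^{|V|}$ one has $x^{\mathbf{a}}\in\p_C^n$ precisely when $\sum_{x_i\in C}a_i\ge n$, so $x^{\mathbf{a}}\in I^{(n)}$ precisely when $\sum_{x_i\in C}a_i\ge n$ for \emph{every} vertex cover $C$ of $G$. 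The second ingredient I would establish is the combinatorial identity
$$\alpha(G^{\mathbf{a}})=\min\Big\{\sum_{x_i\in C}a_i : C\text{ a vertex cover of }G\Big\}.$$
Here $G^{\mathbf{a}}$ is the blow‑up of the induced subgraph of $G$ on $\supp(\mathbf{a})$ in which each surviving vertex $x_i$ becomes $a_i$ pairwise non‑adjacent copies, two copies being adjacent exactly when their originals are. A vertex cover $W$ of $G^{\mathbf{a}}$ must, for every edge $\{x_i,x_j\}$ of $G$ with $a_i,a_j\ge1$, contain all copies of $x_i$ or all copies of $x_j$ (else an edge joining a missed copy of each is uncovered); so the set $A$ of those original vertices all of whose copies lie in $W$ is a vertex cover of $G$ restricted to $\supp(\mathbf{a})$ with $|W|\ge\sum_{x_i\in A}a_i$, and conversely every such $A$ gives a $W$ of size exactly $\sum_{x_i\in A}a_i$; passing between vertex covers of $G$ and of the induced subgraph on $\supp(\mathbf{a})$ by deleting or adjoining the missing vertices preserves the $\mathbf{a}$‑weight, which yields the identity. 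Consequently $x^{\mathbf{a}}\in I^{(n)}$ exactly when $\alpha(G^{\mathbf{a}})\ge n$, and the inclusion ``$\supseteq$'' of the theorem is immediate: each $x^{\mathbf{v}}t^{\alpha(G^{\mathbf{v}})}$ lies in $I^{(\alpha(G^{\mathbf{v}}))}t^{\alpha(G^{\mathbf{v}})}\subseteq\R_s(I)$ (indecomposability plays no role here).

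For ``$\subseteq$'' I would induct on $|\mathbf{a}|=\sum_i a_i$ to show every monomial $x^{\mathbf{a}}t^n$ of $\R_s(I)$, that is with $n\le\alpha(G^{\mathbf{a}})$, is a product of the listed generators. If $\mathbf{a}=0$ then $n=0$ and $x^{\mathbf{a}}t^n=1$. If $G^{\mathbf{a}}$ is decomposable, a partition $V(G^{\mathbf{a}})=V_1\sqcup\dots\sqcup V_r$ with $r\ge2$ and nonempty parts induces vectors $\mathbf{a}_1,\dots,\mathbf{a}_r$ with $\sum_j\mathbf{a}_j=\mathbf{a}$ and $|\mathbf{a}_j|<|\mathbf{a}|$; since adjacency in the blow‑up depends only on the underlying vertex, the subgraph induced by $G^{\mathbf{a}}$ on $V_j$ equals $G^{\mathbf{a}_j}$, so $\sum_j\alpha(G^{\mathbf{a}_j})=\alpha(G^{\mathbf{a}})\ge n$, and choosing $n_j\le\alpha(G^{\mathbf{a}_j})$ with $\sum_jn_j=n$ gives $x^{\mathbf{a}}t^n=\prod_j x^{\mathbf{a}_j}t^{n_j}$ with each factor in $\R_s(I)$; then apply the induction hypothesis. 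If $G^{\mathbf{a}}$ is indecomposable and $n=\alpha(G^{\mathbf{a}})$, then $x^{\mathbf{a}}t^n$ is itself one of the listed generators. The remaining case, $G^{\mathbf{a}}$ indecomposable with $n<\alpha(G^{\mathbf{a}})$, I would treat by observing that lowering one positive coordinate of $\mathbf{a}$ by $1$ changes $\alpha(G^{(\cdot)})$ by $0$ or $1$: by the identity above $\alpha(G^{\mathbf{a}})=\min_C\sum_{x_i\in C}a_i$, and deleting one copy of $x_i$ lowers the weight of a cover $C$ by $1$ if $x_i\in C$ and by $0$ otherwise. Hence, repeatedly lowering coordinates starting from $\mathbf{a}$, one reaches a first vector $\mathbf{b}$ obtained from $\mathbf{a}$ by strictly decreasing it with $\alpha(G^{\mathbf{b}})=n$; then $x^{\mathbf{a}}t^n=(x^{\mathbf{b}}t^n)\cdot\prod_i x_i^{\,a_i-b_i}$, where $x^{\mathbf{b}}t^n\in\R_s(I)$ with $|\mathbf{b}|<|\mathbf{a}|$ (induction) and each $x_i=x^{e_i}t^0$ is one of the listed generators, since the one‑vertex graph $G^{e_i}$ is indecomposable with $\alpha(G^{e_i})=0$.

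The step I expect to be the main obstacle is this last case. The generators singled out in the theorem carry the \emph{maximal} admissible $t$‑exponent $b=\alpha(G^{\mathbf{v}})$, whereas an arbitrary monomial of $\R_s(I)$ only satisfies $n\le\alpha(G^{\mathbf{a}})$, so one needs a device to absorb the slack $\alpha(G^{\mathbf{a}})-n$; the coordinate‑lowering observation is exactly that device. Verifying it carefully, together with the fact that restricting the blow‑up $G^{\mathbf{a}}$ to a sub‑collection of copies is again a parallelization $G^{\mathbf{a}_j}$ (which is what makes the decomposable case work), is the only genuinely delicate point; everything else is bookkeeping with the monomial description of symbolic powers.
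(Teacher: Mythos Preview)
The paper does not prove this statement; it is quoted from \cite{BRC2011} as a preliminary and used without proof. So there is no ``paper's own proof'' to compare against.

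That said, your argument is correct and self-contained. The key steps --- the monomial membership criterion $x^{\mathbf{a}}\in I^{(n)}\iff \alpha(G^{\mathbf{a}})\ge n$ via the cover-weight identity $\alpha(G^{\mathbf{a}})=\min_C\sum_{x_i\in C}a_i$, the observation that any induced subgraph of a parallelization is again a parallelization (which makes the decomposable case go through), and the ``absorb the slack'' step using that lowering a coordinate drops $\alpha(G^{(\cdot)})$ by at most $1$ --- are all sound. Your handling of the degree-zero generators $x_i=x^{e_i}t^0$ (single vertex, indecomposable, $\alpha=0$) is exactly what is needed to close the last case. This matches the spirit of the argument in \cite{BRC2011}, which likewise hinges on the identification of $\alpha(G^{\mathbf{a}})$ with the minimum cover weight and on factoring through a decomposition; you have essentially reproduced that proof.
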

Here we recall the definition of implosive graphs.
\begin{definition}\hfill{}{\rm\
	\begin{enumerate}[i)]
		\item A graph $G$ is called \emph{implosive} if symbolic Rees algebra of $I$ is generated by monomials of the form $x^{\bf v}t^b$, where ${\bf v}=\{0,1\}^{|V|}$.
		\item Let $G_1$ and $G_2$ be graphs. Suppose $G_1\cap G_2=K_r$ is a complete graph, where $G_1\neq K_r$ and $G_2\neq K_r$. Then $G_1\cup G_2$ is called the \emph{clique-sum} of $G_1$ and $G_2$. 
	\end{enumerate}
}\end{definition}

\begin{remark}\cite[Theorem 2.3, Theorem 2.5]{FGR}\label{implosiveRem}\hfil{}{\rm
	\begin{enumerate}[i)]
		\item If $G$ is a cycle, then $G$ is implosive.
		\item The clique-sum of implosive graphs is implosive. 
	\end{enumerate}
}\end{remark}

\section{Regularity of Dominant Cycles}
Gu et al. in \cite{GHRS} shows that if $G$ is unicyclic graph with
$C_{2n+1}=(x_1,\dots, x_{2n+1})$, then  $I^{(s)}=\sum\limits_{i=0}^{k}
I^{s-i(n+1)}(x_1\cdots x_{2n+1})^i$, where $s=k(n+1)+r$ for some $k\in
\BZ$ and $0\leq r\leq n$. In this section, we generalize some of the
results in sections 3 and 5 of \cite{GHRS} and use it to compute the
regularity of the symbolic powers, generalizing \cite[Theorem 5.3]{GHRS}.

\begin{lemma}\label{implLemma}
Let $G'$ be a clique sum of $r$ cycles of size $2n+1$, say $C_1,\dots,
C_r$, and $G$ be a graph by taking the clique sum of $G'$ and some
bipartite graphs. Let $I = I(G)$ and $J=( u_{C_1},\dots, u_{C_r}) $,
where $u_{C_i} = \prod_{j=1}^{2n+1}x_{i_j}$, the product of variables
corresponding to the vertices of the cycle $C_i$. Then $I^{(s)}=I^s$
for all $s\leq n$ and $I^{(s)}=\sum\limits_{i=0}^{k} J^iI^{s-i(n+1)}$,
where $s=k(n+1)+r$ for some $k\in \BZ$ and $0\leq r\leq n$.
\end{lemma}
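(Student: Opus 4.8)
The plan is to characterize the symbolic powers via the generators of the symbolic Rees algebra given in Theorem \ref{implosiveThm}, using the fact that $G$ is implosive (Remark \ref{implosiveRem}: cycles are implosive, and clique-sums of implosive graphs are implosive, so $G'$ and hence $G$ are implosive). Since $G$ is implosive, $\R_s(I)$ is generated by monomials $x^{\mathbf v}t^b$ with $\mathbf v\in\{0,1\}^{|V|}$ and $b=\alpha(G^{\mathbf v})$; I would first show that the only relevant such generators beyond the obvious degree-one generators $x_ix_jt$ (coming from edges, which give $I$ itself in degree $t$) are the monomials $u_{C_i}t^{n+1}$, $i=1,\dots,r$. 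Concretely, for $\mathbf v\in\{0,1\}^{|V|}$ the parallelization $G^{\mathbf v}$ is just the induced subgraph $G\setminus\{x_i:v_i=0\}$, so one needs: an induced subgraph of $G$ is indecomposable only when it is one of the odd cycles $C_i$ (for which $\alpha(C_i)=n+1$, giving the generator $u_{C_i}t^{n+1}$) or an edge. Here is where the hypotheses enter decisively: because $G$ is built from $G'$ and bipartite graphs by clique-sums, and bipartite graphs together with even cycles are decomposable, by the Harary–Plummer result an indecomposable induced subgraph must contain an odd cycle, and the clique-sum structure forces that odd cycle to be one of the $C_i$; a further argument (that attaching anything via a clique to $C_i$ makes the result decomposable, since the clique part is covered separately) pins down the indecomposable induced subgraphs to be exactly the $C_i$ and single edges.

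Granting that description of the generators of $\R_s(I)$, the $s$-th graded piece $I^{(s)} = [\R_s(I)]_s$ (the coefficient of $t^s$) is spanned by products of these generators whose $t$-degrees sum to $s$: a product using $i_1$ copies of generators of the form $u_{C_{j}}t^{n+1}$ and the rest edge-generators contributes a term in $J^{i}I^{s-i(n+1)}$ where $i = i_1+\cdots$ counts the cycle-generators used (note $u_{C_j}u_{C_k}\cdots$ of total degree $i$ in the cycle monomials lands in $J^i$, and the remaining $s-i(n+1)$ factors of $t$-degree $1$ give a factor in $I^{s-i(n+1)}$, which forces $i(n+1)\le s$, i.e.\ $i\le k$). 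This yields the inclusion $I^{(s)}\subseteq \sum_{i=0}^k J^iI^{s-i(n+1)}$, and the reverse inclusion is immediate since each $u_{C_j}\in I^{(n+1)}$ (as $u_{C_j}$ vanishes to order $n+1$ on each associated prime — or simply because $u_{C_j}t^{n+1}\in\R_s(I)$) and $I^s\subseteq I^{(s)}$, so $J^iI^{s-i(n+1)}\subseteq I^{(i(n+1))}I^{(s-i(n+1))}\subseteq I^{(s)}$. The special case $s\le n$ follows because then $k=0$ and the sum collapses to $I^s=I^{(s)}$; alternatively one observes directly that no cycle-generator can appear in a product of $t$-degree $\le n < n+1$.

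The main obstacle I anticipate is the combinatorial step identifying the indecomposable induced subgraphs of $G$ precisely as the odd cycles $C_i$ and the edges — in particular ruling out larger indecomposable induced subgraphs that could arise from how the $C_i$ share cliques among themselves or with the bipartite pieces. The key facts to assemble are: (i) bipartite graphs are decomposable (take a minimum vertex cover and split along it), (ii) a graph with a decomposable induced subgraph on part of a clique-sum decomposition is itself decomposable when the overlap is a clique, and (iii) Harary–Plummer forces any indecomposable graph to contain an odd cycle, combined with the hypothesis that all odd cycles in $G$ live inside $G'$ and each is a $C_{2n+1}$. One must be a little careful that $\alpha$ behaves additively across the clique-sum decompositions used here; this is where the "implosive" machinery and the already-cited results in Remark \ref{implosiveRem} do the heavy lifting, so in the write-up I would lean on those rather than reprove additivity of $\alpha$ from scratch.
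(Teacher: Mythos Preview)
Your proposal is correct and follows essentially the same route as the paper's proof: establish that $G$ is implosive via Remark~\ref{implosiveRem}, identify the indecomposable induced subgraphs of $G$ as precisely the $C_i$'s and the edges (the paper invokes \cite[Theorem~2 and Corollary~1b]{HP} for this, which handle exactly the combinatorial step you flagged as the main obstacle), conclude $\R_s(I)=S[It,Jt^{n+1}]$, and then read off the graded pieces. Your discussion of the reverse inclusion and of how the graded components decompose is more explicit than the paper's, but the underlying argument is the same.
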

\begin{proof}
Since $G$ is the clique sum of odd cycles and bipartite graphs, by
\cite[Theorems 2.3, 2.5]{FGR}, we get that $G$ is implosive.  By
\cite[Theorem 2]{HP}, any indecomposable induced subgraph of $G$ is
contained in $C_i$ for some $i$ or an edge. Moreover, by
\cite[Corollary 1b]{HP}, an indecomposable induced subgraph of $C_i$
is either itself or an edge. Hence by Theorem \ref{implosiveThm}, we
get $\mathcal{R}_s(I)=S[It, Jt^{n+1}]$. Now comparing the graded
components on both sides of the above equality, we get $I^{(s)}=I^s$
for all $s\leq n$ and $I^{(s)}=\sum\limits_{i=0}^{k} J^iI^{s-i(n+1)}$. 
%
\end{proof} 

To study the regularity of $I^{(s)}$, we need to understand the
structure of $I^{(s)} \cap \m^{2s}$. This is done by studying the
intersection with each of the term appearing in the summation in the
previous result.

\begin{lemma}\label{interLem}
	Let $G$ be a graph as in Lemma \ref{implLemma}. Then $$I^{(s)}\cap\mathfrak{m}^{2s}=\sum\limits_{i=0}^{k}J^i\mathfrak{m}^iI^{s-i(n+1)}.$$
\end{lemma}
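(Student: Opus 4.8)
The plan is to prove the identity by establishing both inclusions, where the nontrivial direction is $I^{(s)}\cap\m^{2s}\subseteq\sum_{i=0}^k J^i\m^i I^{s-i(n+1)}$. The inclusion $\supseteq$ is straightforward: each generator of $J^i\m^iI^{s-i(n+1)}$ is a monomial of degree at least $(2n+1)i+i+2(s-i(n+1))=2s+i\geq 2s$, so it lies in $\m^{2s}$; and since $J^iI^{s-i(n+1)}\subseteq I^{(s)}$ by Lemma \ref{implLemma}, each such term lies in $I^{(s)}\cap\m^{2s}$, and $I^{(s)}$ being an ideal the sum stays inside. The key point is that the degrees of the generators of $J^iI^{s-i(n+1)}$ are exactly $2s+i$ (since $u_{C_j}$ has degree $2n+1$ and a generator of $I^{s-i(n+1)}$ has degree $2(s-i(n+1))$, their product has degree $2s-i$... wait, one must be careful: $2s-i$, not $2s+i$). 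Actually the generators of $J^i I^{s-i(n+1)}$ have degree $(2n+1)i + 2(s-i(n+1)) = 2s - i$, which is $\le 2s$; so a generator of $J^iI^{s-i(n+1)}$ need \emph{not} lie in $\m^{2s}$ when $i\ge 1$, which is precisely why one must multiply by $\m^i$ to push the degree back up to $2s$. I would make this degree bookkeeping explicit at the start, since it is the conceptual heart of why the statement takes the form it does.

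For the harder inclusion, I would start with a monomial $f\in I^{(s)}\cap\m^{2s}$. By Lemma \ref{implLemma}, $f\in\sum_{i=0}^k J^iI^{s-i(n+1)}$, so $f$ is (a sum of monomials, but being a monomial itself) divisible by some generator $g$ of $J^iI^{s-i(n+1)}$ for some $i$ in the range; write $f=hg$ with $h$ a monomial and $g=u_{C_{j_1}}\cdots u_{C_{j_i}}\cdot m$ where $m$ generates $I^{s-i(n+1)}$. Among all such representations of $f$, choose one with $i$ \emph{maximal}. The degree of $g$ is $2s-i$, and $\deg f\ge 2s$, so $\deg h\ge i$; the goal is to show $h$ (or rather, after possibly redistributing factors) can be taken to lie in $\m^i$, i.e. that $f\in J^i\m^i I^{s-i(n+1)}$. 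Since $\deg h\ge i$ and $h\in\m^{\deg h}\subseteq\m^i$ automatically once $\deg h\ge i$, this direction is actually immediate \emph{provided} we know $f$ is divisible by a generator of $J^iI^{s-i(n+1)}$ for the right $i$ — which it is, from Lemma \ref{implLemma}. So $f = h\cdot(u_{C_{j_1}}\cdots u_{C_{j_i}})\cdot m \in \m^i\cdot J^i\cdot I^{s-i(n+1)}$, as desired.

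The step I expect to require the most care is verifying that the degree inequality $\deg h \ge i$ really does follow, i.e. ruling out the possibility that $f$ is forced into a term with large $i$ by Lemma \ref{implLemma} yet has degree only slightly above $2s$ — but this is automatic: if $f\in J^iI^{s-i(n+1)}$ then $\deg f\ge 2s-i$, and combined with $\deg f\ge 2s$ we only get $\deg f\ge 2s$, from which $\deg h=\deg f-\deg g\ge 2s-(2s-i)=i$. One subtlety worth flagging is that $f$, being a monomial in $I^{(s)}$, may be divisible by generators of $J^iI^{s-i(n+1)}$ for \emph{several} values of $i$; the argument only needs \emph{one} such $i$ (any of them works, since for that $i$ we land in $J^i\m^iI^{s-i(n+1)}$, a summand on the right-hand side). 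Thus no maximality choice is actually needed — I would streamline by just taking any $i$ with $f\in J^iI^{s-i(n+1)}$ and observing the degree count forces the extra $\m^i$ factor for free. I would close by remarking that this reduces the study of $\reg I^{(s)}$, via the exact sequences relating $I^{(s)}$, $\m^{2s}$, and $I^{(s)}\cap\m^{2s}$, to the regularity of the ordinary-power terms $J^i\m^iI^{s-i(n+1)}$, which is the object of the next results.
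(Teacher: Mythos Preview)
Your proof is correct and follows essentially the same route as the paper: both arguments reduce to the degree count $\deg(u_{C_{j_1}}\cdots u_{C_{j_i}}\cdot m)=i(2n+1)+2(s-i(n+1))=2s-i$, so any monomial of degree $\ge 2s$ lying in $J^iI^{s-i(n+1)}$ must carry an extra factor in $\m^i$. The only cosmetic difference is that the paper states and proves the term-by-term equality $J^iI^{s-i(n+1)}\cap\m^{2s}=J^i\m^iI^{s-i(n+1)}$ and then sums, whereas you start with a monomial in $I^{(s)}\cap\m^{2s}$, pick one $i$ for which it lies in $J^iI^{s-i(n+1)}$, and run the same count; neither approach needs the maximality-of-$i$ device you briefly considered and then correctly discarded.
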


\begin{proof} By Lemma \ref{implLemma}, it is enough to show that
  $ J^iI^{s-i(n+1)}\cap \mathfrak{m}^{2s} = J^i\mathfrak{m}^iI^{s-i(n+1)}.$
Since $J^i\mathfrak{m}^iI^{s-i(n+1)}\subset J^iI^{s-i(n+1)}$ and $J^i\mathfrak{m}^iI^{s-i(n+1)}\subset \mathfrak{m}^{2s}$, we get $$J^i\mathfrak{m}^iI^{s-i(n+1)}\subset J^iI^{s-i(n+1)}\cap \mathfrak{m}^{2s}.$$
	
For the reverse containment, let $u\in
J^iI^{s-i(n+1)}\cap\mathfrak{m}^{2s}$. Write $u=fgh$, where $f\in
G(J^i)$, $g\in G(I^{s-i(n+1)})$. Note that $u\in
\mathfrak{m}^{2s}$implies that $\deg(u)\geq 2s$. Since $\deg(f)=i(2n+1)$ and $\deg(g)=2s-2i(n+1)$, we get that $\deg(h)\geq i$ which completes the proof.
\end{proof}

As an immediate consequence, we obtain the intersection $I^{(s)} \cap
\m^{2s}$ for the class of graphs that we are considering.

\begin{corollary}
Let $G$ be a graph as in Lemma \ref{implLemma}. If $N_G(C_i)=V$ for any odd cycle
$C_i$ in $G$,   then $I^{(s)}\cap\mathfrak{m}^{2s}=I^s$. 
\end{corollary}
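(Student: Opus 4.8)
The plan is to combine Lemma~\ref{interLem} with the hypothesis $N_G(C_i)=V$ for some fixed odd cycle $C_i$. By Lemma~\ref{interLem} we have
$$I^{(s)}\cap\mathfrak{m}^{2s}=\sum_{i=0}^{k}J^i\mathfrak{m}^iI^{s-i(n+1)},$$
so it suffices to show that each summand $J^i\mathfrak{m}^iI^{s-i(n+1)}$ is already contained in $I^s$; the reverse inclusion $I^s\subset I^{(s)}\cap\mathfrak{m}^{2s}$ is clear since every generator of $I^s$ has degree exactly $2s$. Since $I^s$ is itself a summand (the $i=0$ term), this would force the whole sum to equal $I^s$.

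First I would reduce, by the multiplicativity of the ideals involved, to proving the single key containment $J\mathfrak{m}\subset I^2$. Indeed, if $J\mathfrak{m}\subset I^2$ then $J^i\mathfrak{m}^i\subset I^{2i}$, and multiplying by $I^{s-i(n+1)}$ gives $J^i\mathfrak{m}^iI^{s-i(n+1)}\subset I^{2i+s-i(n+1)}=I^{s-i(n-1)}\subset I^s$, using $n\geq 1$. So the heart of the matter is to show $u_{C_j}\cdot x_\ell\in I^2$ for every cycle generator $u_{C_j}$ of $J$ and every variable $x_\ell$. Here I would use the hypothesis: $N_G(C_j)=V$ means every vertex $x_\ell$ is adjacent to some vertex $x_{j_a}$ of $C_j$, so $x_\ell x_{j_a}\in I$. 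Writing $u_{C_j}=\prod_{b=1}^{2n+1}x_{j_b}$ and deleting the single factor $x_{j_a}$ from it leaves a product of $2n$ consecutive-around-the-cycle vertices, which splits as a product of $n$ edges of $C_j$ (pair them up going around the cycle starting adjacent to $x_{j_a}$), hence lies in $I^n\subset I$. Therefore $u_{C_j}x_\ell=(x_\ell x_{j_a})\cdot\big(u_{C_j}/x_{j_a}\big)\in I\cdot I^n\subset I^2$, as desired.

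The step I expect to require the most care is making the combinatorial factorization $u_{C_j}/x_{j_a}\in I^n$ precise: removing one vertex from an odd cycle $C_{2n+1}$ leaves a path on $2n$ vertices, whose vertex set decomposes into $n$ disjoint edges, so the corresponding monomial is a product of $n$ elements of $I(G)$. This is elementary but should be stated cleanly since it is exactly where the odd length $2n+1$ (rather than the parity) enters — we need the number of remaining vertices to be even. One should also note that $N_G(C_i)=V$ for \emph{some} $i$ is all that is used; if it holds for several cycles that is only more information. Assembling these pieces — Lemma~\ref{interLem} for the decomposition, the reduction to $J\mathfrak{m}\subset I^2$, and the edge-path factorization — gives $I^{(s)}\cap\mathfrak{m}^{2s}=I^s$ for all $s\geq 1$.
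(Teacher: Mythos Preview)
Your overall strategy matches the paper's, and your key computation --- that $u_{C_j}x_\ell = (x_\ell x_{j_a})\cdot(u_{C_j}/x_{j_a}) \in I \cdot I^n = I^{n+1}$ --- is exactly right. But there is a genuine arithmetic slip in the reduction step. You weaken $J\mathfrak{m} \subset I^{n+1}$ to $J\mathfrak{m} \subset I^2$ and then claim $I^{s-i(n-1)} \subset I^s$ ``using $n \geq 1$''. This inclusion is false: for $n \geq 2$ and $i \geq 1$ the exponent $s - i(n-1)$ is strictly smaller than $s$, and for powers of a proper ideal one has $I^a \subset I^b$ only when $a \geq b$, so the inclusion goes the wrong way (e.g.\ $I^{s-1} \not\subset I^s$). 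The fix is simply not to discard the full strength of what you actually proved: from $J\mathfrak{m} \subset I^{n+1}$ one gets $J^i\mathfrak{m}^i \subset I^{i(n+1)}$, hence $J^i\mathfrak{m}^i I^{s-i(n+1)} \subset I^{i(n+1)} I^{s-i(n+1)} = I^s$, which is precisely the paper's argument.

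A smaller point: the hypothesis ``$N_G(C_i) = V$ for any odd cycle $C_i$'' is to be read as ``for every odd cycle'', not ``for some''. Your own argument already uses it for every generator $u_{C_j}$ of $J$, since you need each $u_{C_j} x_\ell \in I^{n+1}$ separately; the parenthetical remark that ``$N_G(C_i)=V$ for \emph{some} $i$ is all that is used'' is therefore incorrect.
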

\begin{proof}
We show that $\m J \subseteq I^{n+1}$. Let $x_i \in V(G)$ and
$u_{C_i} = \prod_{j=1}^{2n+1} x_{i_j}$ be a minimal generator of $J$.
Without loss of generality, let $x_{i_1} \in N_{C_i}(x_i)$. Then
$x_iu_{C_i} = x_ix_{i_1}\cdot x_{i_2}x_{i_3} \cdots
x_{i_{2n}}x_{i_{2n+1}} \in I^{n+1}.$  Hence $\m J \subseteq I^{n+1}$
so that $\m^iJ^i \subseteq I^{i(n+1)}$.
\end{proof}

For a homogeneous ideal $I \subset S$, let $\alpha(I)$ denote the
least degree of a minimal generator of $I$. The \textit{Waldschmidt}
constant of $I$ is defined to be $\hat{\alpha}(I) :=
\displaystyle{\underset{s \to \infty}{\lim}
\frac{\alpha(I^{(s)})}{s}}$. The real number $\rho(I) = \sup\{s/t \mid
I^{(s)} \not\subset I^t\}$ is called \textit{resurgence} number of
$I$ and $\rho_a(I) = \sup\{s/t \mid I^{(sr)} \not\subset I^{tr}
\text{ for all } r \gg 0 \}$ is called \textit{asymptotic
resurgence} number of $I$. We compute the Waldschmidt constant,
resurgence and asymptotic resurgence number of the edge ideals of the
graphs considered in Lemma \ref{implLemma}.
\begin{corollary}
  Let $G$ be as in Lemma \ref{implLemma}. Then
  \begin{enumerate}
	\item $\alpha(I(G)^{(s)}) = 2s - \lfloor \frac{s}{n+1} \rfloor$ for
	  all $s \in \mathbb{N}$;
	\item $\hat{\alpha}(I(G)) = \frac{2n+1}{n+1}$;

	\item $\alpha(I(G)^{(s)}) < \alpha(I^t)$ if and only if
	  $I(G)^{(s)} \not\subset I^t$;
	\item $\rho(I(G)) = \rho_a(I(G)) = \frac{2n+2}{2n+1}$.
  \end{enumerate}
\end{corollary}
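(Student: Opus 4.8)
The plan is to establish all four parts by exploiting the explicit decomposition of symbolic powers furnished by Lemma~\ref{implLemma}, which reduces everything to a bookkeeping of degrees among the generators $I$ and $J$. Write $s = k(n+1)+r$ with $0\le r\le n$. The minimal generators of $J^iI^{s-i(n+1)}$ have degree $i(2n+1) + 2(s-i(n+1)) = 2s - i$, and Lemma~\ref{implLemma} tells us $I^{(s)} = \sum_{i=0}^{k} J^iI^{s-i(n+1)}$. Since $\alpha$ of a sum of monomial ideals is the minimum of the individual $\alpha$'s, and $2s-i$ is decreasing in $i$, the least degree is attained at $i=k$, giving $\alpha(I(G)^{(s)}) = 2s - k = 2s - \lfloor s/(n+1)\rfloor$. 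This proves part (1), and part (2) follows by dividing by $s$ and taking the limit: $\hat\alpha(I(G)) = \lim_s (2s - \lfloor s/(n+1)\rfloor)/s = 2 - \frac{1}{n+1} = \frac{2n+1}{n+1}$.

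For part (3), the direction "$I(G)^{(s)}\not\subset I^t \Rightarrow \alpha(I(G)^{(s)}) < \alpha(I^t)$" is the substantive one; the converse is immediate since $\alpha(I(G)^{(s)}) < \alpha(I^t)$ forces some generator of $I^{(s)}$ to have degree below every element of $I^t$, so it cannot lie in $I^t$. For the forward direction I would argue contrapositively: suppose $\alpha(I(G)^{(s)}) \ge \alpha(I^t) = 2t$, i.e.\ $2s - k \ge 2t$. I claim then $I(G)^{(s)}\subset I^t$. It suffices to show each $J^iI^{s-i(n+1)}\subset I^t$ for $0\le i\le k$. Using $\m J\subseteq I^{n+1}$ is not available here since we are not assuming $N_G(C_i)=V$; instead I would use $J \subseteq I^n$ directly — indeed $u_{C_i} = x_{i_1}\cdots x_{i_{2n+1}}$ and one can peel off $n$ disjoint edges from the odd cycle $C_i$, leaving one leftover vertex, so $u_{C_i}\in I^n \m \subseteq I^n$; more precisely $u_{C_i}\in I^n$. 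Hence $J^iI^{s-i(n+1)}\subseteq I^{in}I^{s-i(n+1)} = I^{s-i}$. Since $i\le k$ and $2s-k\ge 2t$, we have (because $2s - i \ge 2s - k \ge 2t$, and $s - i \ge s - k \ge t$ when... here I must be careful) $s - i \ge t$: from $2s-k\ge 2t$ we get $s \ge t + k/2$, and since $i\le k$, $s - i \ge t + k/2 - k = t - k/2$, which is not quite enough. So I would instead invoke part (1) more carefully: the containment $I(G)^{(s)}\subset I^t$ holds iff every generator of $I^{(s)}$ lies in $I^t$, and a monomial of degree $d$ lies in $I^t$ as soon as $d\ge 2t$ provided it is "spread out" enough — this is where the combinatorics of $G$ must enter, and I expect the clean statement is that a generator of $J^iI^{s-i(n+1)}$ lies in $I^t$ exactly when $s-i\ge t$, which for the $i=k$ term reads $s-k\ge t$, equivalent to $2s-2k\ge 2t$; but $\alpha(I^{(s)})=2s-k$, so the honest equivalence is $\alpha(I(G)^{(s)})<2t \iff 2s-k<2t \iff s - k/2 < t$. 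I would therefore track the precise combinatorial criterion for $J^iI^{s-i(n+1)}\subseteq I^t$ and match it against $2s-k < 2t$; this matching is the technical heart.

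For part (4), recall $\rho_a(I)\le\rho(I)$ always, so it suffices to prove $\rho(I(G))\le \frac{2n+2}{2n+1}$ and $\rho_a(I(G))\ge\frac{2n+2}{2n+1}$. The upper bound on $\rho$: by part (3), $I(G)^{(s)}\not\subset I^t$ iff $\alpha(I(G)^{(s)})<\alpha(I^t)=2t$, i.e.\ $2s - \lfloor s/(n+1)\rfloor < 2t$; using $\lfloor s/(n+1)\rfloor \le s/(n+1)$ this is implied by $s\cdot\frac{2n+1}{n+1} < 2t$, i.e.\ $s/t < \frac{2(n+1)}{2n+1}$. Hence any ratio $s/t$ witnessing non-containment satisfies $s/t<\frac{2n+2}{2n+1}$, giving $\rho(I(G))\le\frac{2n+2}{2n+1}$. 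The lower bound on $\rho_a$: take $s = r(2n+2)$ and $t = r(2n+1)$; then $\lfloor s/(n+1)\rfloor = 2r$, so $\alpha(I(G)^{(s)}) = 2r(2n+2) - 2r = 2r(2n+1) = 2t$... which gives equality, not strict inequality, so $I^{(s)}$ need not fail to lie in $I^t$. I would instead take $s = r(2n+2)$ and $t = r(2n+1)+1$ for large $r$: then $\alpha(I^{(s)}) = 2r(2n+1) < 2t = 2r(2n+1)+2$, so by part (3) $I(G)^{(s)}\not\subset I^t$, and $s/t = \frac{r(2n+2)}{r(2n+1)+1}\to\frac{2n+2}{2n+1}$ from below; combined with the fact that this works for all $r\gg 0$, we get $\rho_a(I(G))\ge\frac{2n+2}{2n+1}$. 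Chaining the inequalities $\frac{2n+2}{2n+1}\le\rho_a(I(G))\le\rho(I(G))\le\frac{2n+2}{2n+1}$ forces all three to be equal. The main obstacle I anticipate is part (3)'s forward implication: producing the exact combinatorial condition under which a minimal generator of $J^iI^{s-i(n+1)}$ — which is a monomial of degree $2s-i$ supported on edges and odd-cycle "triangular" pieces — actually decomposes as a product of $t$ edges, and verifying this condition is equivalent to $2s-k < 2t$ rather than merely implied by it.
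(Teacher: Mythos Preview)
The paper does not actually supply a proof; it simply asserts that the argument is identical to \cite[Theorem~3.6]{GHRS} and omits it. Your derivations of (1) and (2) from the decomposition of Lemma~\ref{implLemma} are correct and are precisely what that citation amounts to.

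The real content is (3), and here you have located the obstruction without resolving it. The containment $J\subseteq I^{n}$ gives only $J^{i}I^{s-i(n+1)}\subseteq I^{s-i}$, and as your own calculation shows, from $2s-k\ge 2t$ one cannot conclude $s-k\ge t$. The missing ingredient is the \emph{squaring} relation
\[
u_{C}^{2}=(x_{i_{1}}x_{i_{2}})(x_{i_{2}}x_{i_{3}})\cdots(x_{i_{2n+1}}x_{i_{1}})\in I^{2n+1},
\]
which upgrades the estimate to $J^{i}I^{s-i(n+1)}\subseteq I^{\lfloor(2s-i)/2\rfloor}$: in the unicyclic case $J=(u_{C})$, write $i=2m$ or $i=2m+1$ and use $u_{C}^{2m}\in I^{m(2n+1)}$ together with $u_{C}\in I^{n}$. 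Since every generator of that summand has degree $2s-i\ge 2s-k\ge 2t$, it then lies in $I^{\lfloor(2s-i)/2\rfloor}\subseteq I^{t}$, and the contrapositive of (3) follows. In the multi-cycle setting of Lemma~\ref{implLemma} the corresponding step is $u_{C_{a}}u_{C_{b}}\in I^{2n+1}$; when $C_{a}$ and $C_{b}$ share a vertex this is obtained exactly as $\mathfrak{m}J\subseteq I^{n+1}$ was. It is this squaring trick, not merely $J\subseteq I^{n}$, that you are missing.

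For (4), your upper bound on $\rho$ is fine once (3) is in hand, but your lower bound on $\rho_{a}$ does not fit the definition: the pair $\bigl(r(2n+2),\,r(2n+1)+1\bigr)$ varies with $r$, so it witnesses $\rho(I)\ge\tfrac{2n+2}{2n+1}$ but says nothing about $\rho_{a}$, which requires a \emph{fixed} ratio $s/t$ with $I^{(sr)}\not\subset I^{tr}$ for all $r\gg 0$. The clean route is the general inequality $\rho_{a}(I)\ge\alpha(I)/\hat{\alpha}(I)=\tfrac{2(n+1)}{2n+1}$; chained with $\rho_{a}\le\rho\le\tfrac{2n+2}{2n+1}$ this finishes the argument.
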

\begin{proof}
Since the proof is exactly same as the proof of
\cite[Theorem 3.6]{GHRS}, we skip it here.
\end{proof}

We now generalize \cite[Theorem 5.3]{GHRS}.

\begin{theorem}\label{Theorem}
Let $G$ be as in Lemma \ref{implLemma}. If $N_G(C_i)=V$ for any odd
cycle $C_i$ in $G$, then $\reg\left(I^{(s)}\right)=\reg\left(I^{s}\right)$ for all $s\geq 1$.
\end{theorem}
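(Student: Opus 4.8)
The plan is to sandwich $\reg(I^{(s)})$ between $\reg(I^s)$ on both sides. For one direction, recall that in general $I^s \subseteq I^{(s)}$, and since the colon/quotient $I^{(s)}/I^s$ is supported only at $\m$ (because $I^{(s)} = \bigcap_{\p \in \ass(I)} \p^s$ agrees with $I^s$ locally away from $\m$ — indeed $G$ restricted away from any neighbourhood of the cycles is bipartite), a standard argument shows $\reg(I^{(s)}) \le \max\{\reg(I^s), \reg(I^{(s)}/I^s) + 1\}$. I would instead take the cleaner route already set up in the excerpt: use the short exact sequence
\[
0 \longrightarrow \frac{I^{(s)} \cap \m^{2s}}{I^s \cap \m^{2s}} \longrightarrow \frac{S}{I^s} \oplus \frac{S}{I^{(s)} \cap \m^{2s}} \longrightarrow \frac{S}{I^{(s)}} \longrightarrow 0,
\]
or more directly exploit that the Corollary preceding the theorem gives $I^{(s)} \cap \m^{2s} = I^s$. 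Since every minimal generator of $I^{(s)}$ has degree $\ge 2\alpha(I)/\cdots$ — in fact one checks generators of $I^{(s)}$ have degree $< 2s$ only in controlled ways — the pieces of $I^{(s)}$ outside $\m^{2s}$ are understood.

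Concretely, I would argue as follows. By the Corollary, $I^{(s)} \cap \m^{2s} = I^s$. Now consider the exact sequence
\[
0 \longrightarrow \frac{S}{I^{(s)} \cap \m^{2s}} \longrightarrow \frac{S}{I^{(s)}} \oplus \frac{S}{\m^{2s}} \longrightarrow \frac{S}{I^{(s)} + \m^{2s}} \longrightarrow 0.
\]
Since $I^{(s)} + \m^{2s}$ is $\m$-primary and agrees with $\m^{2s}$ in degrees $< 2s$ while being everything in degrees $\ge 2s$... actually the key point is that $I^{(s)}$ and $I^s$ differ only in degrees $< 2s$: every generator of $I^s$ has degree $\ge 2s$, and $I^{(s)}/I^s$ lives in degrees $< 2s$ as the generators $u_{C_i}$ have degree $2n+1 < 2(n+1)$. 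So $\reg(S/(I^{(s)}+\m^{2s}))$ and $\reg(S/(I^s + \m^{2s}))$ can be compared, and combined with $\reg(S/\m^{2s}) = 2s - 1$ one extracts $\reg(I^{(s)}) \le \max\{\reg(I^s), 2s\}$. Finally, since $\reg(I^s) \ge 2s$ always holds for edge ideals (the generators are in degree $2$ and $I^s$ has a generator in degree $2s$, forcing $\reg(S/I^s) \ge 2s - 1$), the max is just $\reg(I^s)$, giving $\reg(I^{(s)}) \le \reg(I^s)$.

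For the reverse inequality $\reg(I^s) \le \reg(I^{(s)})$, I would use the same exact sequence read the other way: from
\[
0 \longrightarrow \frac{S}{I^s} \longrightarrow \frac{S}{I^{(s)}} \oplus \frac{S}{I^s + \m^{2s}} \longrightarrow \frac{S}{I^{(s)} + \m^{2s}} \longrightarrow 0,
\]
noting $I^s + \m^{2s} = \m^{2s}$ since $I^s \subseteq \m^{2s}$, and $I^{(s)} + \m^{2s}$ has regularity at most $\max\{2s - 1, \reg(S/I^{(s)})\}$; one then reads off $\reg(S/I^s) \le \max\{\reg(S/I^{(s)}), 2s - 1\} = \reg(S/I^{(s)})$, the last equality because $\reg(S/I^{(s)}) \ge \reg(S/I^s) \ge 2s-1$ from $I^s \subseteq I^{(s)}$ — wait, that inequality goes the wrong way, so instead I use that $I^{(s)}$ has the generator $u_{C_1}^{k} \cdot (\text{stuff})$ in appropriately high degree, or simply cite \cite[Theorem 5.3]{GHRS} style reasoning that the bottom-degree generators of $I^{(s)}$ force $\reg(S/I^{(s)}) \ge 2s - 1$ too. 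The main obstacle is precisely this bookkeeping of which regularity dominates in each short exact sequence and ensuring the $\m^{2s}$-truncation genuinely captures the difference $I^{(s)} \setminus I^s$ — this is where the hypothesis $N_G(C_i) = V$ is essential, since it forces $\m J \subseteq I^{n+1}$ and hence the clean identity $I^{(s)} \cap \m^{2s} = I^s$; without it the two ideals could differ in degrees $\ge 2s$ and the argument collapses.
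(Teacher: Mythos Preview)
Your route is the same as the paper's --- the short exact sequence
\[
0 \longrightarrow \frac{S}{I^s} \longrightarrow \frac{S}{I^{(s)}}\oplus\frac{S}{\m^{2s}} \longrightarrow \frac{S}{I^{(s)}+\m^{2s}} \longrightarrow 0
\]
together with the identity $I^{(s)}\cap\m^{2s}=I^s$ from the preceding Corollary --- and your upper bound $\reg(S/I^{(s)})\le\reg(S/I^s)$ is correct. The gap is in the reverse inequality.

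From $0\to A\to B\to C\to 0$ the bound on the kernel is $\reg(A)\le\max\{\reg(B),\reg(C)+1\}$, not $\max\{\reg(B),\reg(C)\}$. Here $C=S/(I^{(s)}+\m^{2s})$ has regularity $2s-1$, so the correct conclusion is
\[
\reg(S/I^s)\;\le\;\max\{\reg(S/I^{(s)}),\,2s-1,\,2s\}\;=\;\max\{\reg(S/I^{(s)}),\,2s\},
\]
and your observation that $I^{(s)}$ has a minimal generator in degree $2s$ only gives $\reg(S/I^{(s)})\ge 2s-1$, which does \emph{not} kill the $2s$ in the maximum. The case $\reg(S/I^{(s)})=2s-1$ and $\reg(S/I^s)=2s$ is not excluded by anything you wrote. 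This is exactly the obstruction the paper has to work around: it invokes \cite[Theorem~4.6]{GHRS} to obtain the strict inequality $\reg(S/I^{(s)})\ge 2s+\nu(G)-2>2s-1$ whenever $\nu(G)\ge 2$, and then the exact sequence forces $\reg(S/I^s)=\reg(S/I^{(s)})$ directly. The remaining case $\nu(G)=1$ is handled separately by showing that $G$ is either $C_5$ or co-chordal, so that $I^s$ has a linear resolution and $\reg(S/I^s)=2s-1$ can be matched against $\reg(S/I^{(s)})=2s-1$. Your sketch is missing both of these ingredients; without the lower bound $\reg(S/I^{(s)})\ge 2s$ (or the $\nu(G)=1$ bypass) the argument does not close.
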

\begin{proof}
Suppose $\nu(G)=1$. Then $G$ is either $C_5$ or is the clique-sum of a
$C_3$, say $T$, with several copies of $C_3$, say $G_1,
\ldots, G_r$ along the edges of $T$ and copies of $P_2$, say $G_{r+1}, \ldots, G_s$
along the vertices of $T$. If $G = C_5$, then the assertion is proved
in \cite{GHRS}. If $G \neq C_5$, then $G^c$ is the clique-sum of a $K_s$
with $s-r$ copies of $C_3$ along the edges of $K_s$ and with $r$ many
edges along the vertices of $K_s$. Hence $G$ is a co-chordal graph.
Therefore, $S/I^s$ has linear resolution for all $s \geq 1$.
Consider the short exact sequence
\begin{eqnarray}\label{ses1}
0\longrightarrow \dfrac{S}{I^s}\longrightarrow\dfrac{S}{I^{(s)}}\oplus
\dfrac{S}{\mathfrak{m}^{2s}}\longrightarrow
\dfrac{S}{I^{(s)}+\mathfrak{m}^{2s}}\longrightarrow 0.
\end{eqnarray}
Note that since $I^{(s)}$ contains a minimal generator of degree $2s$,
$\reg(S/I^{(s)}) \geq 2s - 1 = \reg(S/\m^{2s})$. Also, $S/(I^{(s)}+
\m^{2s})$ is Artinian, $[S/(I^{(s)} + \m^{2s})]_{2s-1} \neq 0$ and 
$[S/(I^{(s)}+\m^{2s})]_{2s} = 0$ so that $\reg(S/I^{(s)}+\m^{2s}) =
2s-1$.  Hence it follows from the exact sequence (\ref{ses1}) that
$\reg(S/I^{(s)}) \leq 2s - 1$. Therefore $\reg(S/I^{(s)}) = 2s-1 =
\reg(S/I^s)$.

Assume now that $\nu(G)\geq 2$.
Since $S/(I^{(s)} + \mathfrak{m}^{2s})$ is Artinian, the regularity is
given by the socle degree. Hence
$\reg\left(\dfrac{S}{I^{(s)}+\mathfrak{m}^{2s}}\right)=2s-1=\reg\left(\dfrac{S}{\mathfrak{m}^{2s}}\right)$
and by \cite[Theorem 4.6]{GHRS} $\reg\left(\dfrac{S}{I^{(s)}}\right)\geq 2s+\nu(G)-2$. Since
$\nu(G)\geq 2$, this implies that
$\reg\left(\dfrac{S}{I^{(s)}}\right)>\reg\left(\dfrac{S}{I^{(s)}+\mathfrak{m}^{2s}}\right)$.
Hence it follows from the short exact sequence (\ref{ses1}) that $\reg\left(I^{(s)}\right)=\reg\left(I^s\right).$
\end{proof}

\begin{example}{\em
We would like to note here that the class of graphs that we have considered 
here is more general than unicyclic graphs with a
dominating	odd cycle which are considered in \cite{GHRS}. 

\begin{minipage}{\linewidth}
\begin{minipage}{0.53\linewidth}
For example, the graphs given on the right are not unicyclic graphs but
satisfy the hypotheses of Theorem \ref{Theorem}. The first one is a
clique sum of $C_5$ with some bipartite graphs which contain cycles.
The second graph on the right is a clique sum of three $C_3$'s.

\noindent
\end{minipage}
	\begin{minipage}{0.25\linewidth}
\[
		\begin{tikzpicture}[scale=0.5]
		\tikzstyle{edge} = [draw,thick,-]		
		\draw[-] (0,0) -- (6,0)--(6,2)--(4,2)--(4,0);
		\draw[-] (2,0) -- (2,2)--(0,2)--(0,0);
		\draw[-] (2,2)--(3,3.3);
		\draw[-] (2,4)--(3,3.3)--(4,4);
		\draw[-] (4,2)--(3,3.3);
		\draw [fill] (0,0) circle [radius=0.05];
		\draw [fill] (2,0) circle [radius=0.05];		
		\draw [fill] (4,0) circle [radius=0.05];		
		\draw [fill] (6,0) circle [radius=0.05];	
		\draw [fill] (0,2) circle [radius=0.05];
		\draw [fill] (2,2) circle [radius=0.05];		
		\draw [fill] (4,2) circle [radius=0.05];		
		\draw [fill] (6,2) circle [radius=0.05];
		\draw [fill] (3,3.3) circle [radius=0.05];
		\draw [fill] (2,4) circle [radius=0.05];		
		\draw [fill] (4,4) circle [radius=0.05];
				
		\node [below] at (0,0) { };
		\node [below] at (2,0) { };		
		\node [below] at (4,0) { };		
		\node [below] at (6,0) { };
		\node [above] at (0,2) { };
		\node [above] at (2,2) { };		
		\node [above] at (4,2) { };		
		\node [above] at (6,2) { };	
		\node [above] at (3,3.3) { };
		\node [above] at (1,3.3) { };		
		\node [above] at (5,3.3) { };
		\end{tikzpicture}
	\]
	\end{minipage}
	\begin{minipage}{0.2\linewidth}
		\[
		\begin{tikzpicture}[scale=0.65]
		\tikzstyle{edge} = [draw,thick,-]		
		\draw[-] (0,0) -- (2,0)--(3,1.5)--(1,1.5)--(-1,1.5)--(0,0);
		\draw[-] (2,0)--(1,1.5) --(0,0);

		\draw [fill] (0,0) circle [radius=0.05];
		\draw [fill] (2,0) circle [radius=0.05];		
		\draw [fill] (3,1.5) circle [radius=0.05];		
		\draw [fill] (1,1.5) circle [radius=0.05];	
		\draw [fill] (-1,1.5) circle [radius=0.05];
		
		\node [below] at (0,0) { };
		\node [below] at (2,0) { };		
		\node [above] at (3,1.5) { };		
		\node [above] at (1,1.5) { };
		\node [above] at (-1,1.5) {};
		\end{tikzpicture}
		\]	
	\end{minipage}
\end{minipage}	
}
\end{example}

\section{Regularity of Unicyclic Graphs}
In this section, we focus on graphs which has only one odd cycle. For
the rest of the paper, let $G$ be a graph obtained by taking
clique-sum along the vertices or edges of an odd cycle $C_{2n+1}$ and
some bipartite graphs. Let $V(C_{2n+1}) = \{x_1, \ldots,
x_{2n+1}\}, ~N_G(C_{2n+1}) \setminus V(C_{2n+1}) = \{y_1, \ldots,
y_l\}$ and $V(G) \setminus N_G(C_{2n+1}) = \{z_1, \ldots, z_m\}$.
%
%
Now we set $I = I(G), ~\mu=x_1\cdots x_{2n+1},$
$L=( x_1,\dots ,x_{2n+1}, y_1,\dots, y_l),
K=( z_1,\dots, z_m)$ and $\m$ the homogeneous maximal ideal in
$\sk[L,K]$. For any monomial ideal $J$, let
$G(J)$ denote the set of minimal monomial generators of $J$.
We first give a refinement of the decomposition of $I^{(s)} \cap
\m^{2s}$.
\begin{lemma}
	$I^{(s)}\cap \mathfrak{m}^{2s}=\sum\limits_{i=0}^{k}\mu^iK^iI^{s-i(n+1)}$.
\end{lemma}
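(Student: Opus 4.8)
The plan is to reduce the claim to a statement about a single summand $J^i I^{s-i(n+1)}$ and then intersect with $\m^{2s}$, recovering the sharper description with $\mu^i K^i$ in place of $J^i \m^i$. Recall that by Lemma \ref{implLemma} we have $I^{(s)} = \sum_{i=0}^{k} J^i I^{s-i(n+1)}$ with $J = (u_{C_1}, \dots, u_{C_r})$, and since we are now in the unicyclic case $J = (\mu)$, so $J^i = (\mu^i)$. Thus $I^{(s)} = \sum_{i=0}^k \mu^i I^{s-i(n+1)}$, and it suffices to prove, for each $i$,
\[
\mu^i I^{s-i(n+1)} \cap \m^{2s} = \mu^i K^i I^{s-i(n+1)}.
\]
The containment $\supseteq$ is immediate: $\mu^i K^i I^{s-i(n+1)} \subseteq \mu^i I^{s-i(n+1)}$ clearly, and a generator $\mu^i \cdot w \cdot g$ with $w \in G(K^i)$, $g \in G(I^{s-i(n+1)})$ has degree $i(2n+1) + i + (2s - 2i(n+1)) = 2s + i \geq 2s$, so it lies in $\m^{2s}$.

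For the reverse containment $\subseteq$, I would take a monomial $u \in \mu^i I^{s-i(n+1)} \cap \m^{2s}$ and write $u = \mu^i g h$ with $g \in G(I^{s-i(n+1)})$ and $h$ a monomial (the ``slack''). As in the proof of Lemma \ref{interLem}, comparing degrees gives $\deg h \geq i$. The new content is that $h$ must actually be divisible by a degree-$i$ monomial supported on the variables $z_1, \dots, z_m$ of $K$, not merely by an arbitrary degree-$i$ monomial. The reason is the structure of $G$: the variables appearing in $\mu$ and in the generators of $I^{s-i(n+1)}$ corresponding to edges meeting $C_{2n+1}$ are highly constrained, and the minimal-generator analysis should show that any monomial in $\mu^i I^{s-i(n+1)}$ whose degree exceeds $2s$ can be rewritten so that the excess is absorbed into $K$. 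Concretely, I expect one argues: if $h$ involves a variable from $L$ (i.e.\ a vertex of $C_{2n+1}$ or a neighbor), then one can reorganize $\mu^i g h$ by peeling off an edge of $I$ from the $\mu$-part together with that $L$-variable, effectively trading a factor of $\mu$ for a factor of $I$ and decreasing $i$; iterating, one reaches a representation in which all slack variables lie in $K$. This uses that $C_{2n+1}$ is an odd cycle of length $2n+1$ so $\mu$ splits into $n$ edges plus one leftover vertex, which is exactly the mechanism already exploited in the Corollary after Lemma \ref{interLem}.

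The main obstacle will be the rewriting/book-keeping argument in the previous paragraph: showing that whenever the slack monomial $h$ touches a vertex $x_j$ of the cycle or a neighbor $y_t$, one can legitimately re-express the element so as to push that variable into $K$ while keeping the total power of $\mu$ and the total power of $I$ consistent with a valid decomposition of a (possibly smaller) symbolic power term. This is where one must be careful that the ``trade'' $\mu \leftrightarrow$ (edge of $I$) does not break membership in $\mu^{i'} I^{s - i'(n+1)}$ for the adjusted index $i'$, and that the induction on $\deg h$ (or on $i$) actually terminates at a monomial all of whose slack is in $K$. Once that structural lemma is in hand, the equality of the two ideals follows by summing over $i$ from $0$ to $k$, exactly paralleling the deduction of Lemma \ref{interLem} from its single-summand version. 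I would also double-check the edge case $i = 0$, where the statement reads $I^s \cap \m^{2s} = I^s$, which holds trivially since every generator of $I^s$ has degree $2s$.
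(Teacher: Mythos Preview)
Your reduction ``it suffices to prove, for each $i$, $\mu^i I^{s-i(n+1)}\cap\m^{2s}=\mu^i K^i I^{s-i(n+1)}$'' is false as stated. Take $G=C_5$ itself, so $n=2$, $K=0$, and let $s=3$, $i=1$: then $\mu I^{s-3}\cap\m^{2s}=(\mu)\cap\m^6\ni x_1\mu$, while $\mu K I^{s-3}=0$. The point is precisely the one you discover later in your own sketch: when the slack monomial $h$ contains a variable from $L$, the trade $a\mu\in I^{n+1}$ moves the element into the summand with index $i-1$, not back into the same index $i$. So the containment you can actually prove is $\mu^i I^{s-i(n+1)}\cap\m^{2s}\subseteq\sum_{i'\le i}\mu^{i'}K^{i'}I^{s-i'(n+1)}$, and only after summing over $i$ do both sides match.

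The paper's proof sidesteps this by not going back to Lemma~\ref{implLemma} but starting directly from Lemma~\ref{interLem}, which already gives $I^{(s)}\cap\m^{2s}=\sum_{i=0}^k\mu^i\m^iI^{s-i(n+1)}$. One then expands $\m^i=(L+K)^i=\sum_{t=0}^i L^tK^{i-t}$ and uses the single observation $a\mu\in I^{n+1}$ for $a\in L$ (hence $L^t\mu^t\subseteq I^{t(n+1)}$) to get
\[
\mu^i L^tK^{i-t}I^{s-i(n+1)}\subseteq \mu^{i-t}K^{i-t}I^{s-(i-t)(n+1)},
\]
after which reindexing $i-t\mapsto i$ gives the desired sum. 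This is exactly your ``trade'' mechanism, but packaged so that no induction or rewriting bookkeeping is needed; the shift to a different index is explicit and the argument is two lines. Your approach can be repaired by replacing the per-summand equality with the weaker containment into the full sum, but at that point you are reproducing the paper's computation with extra scaffolding.
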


\begin{proof}
Using Lemma \ref{interLem}, we get $I^{(s)}\cap
\mathfrak{m}^{2s}=\sum\limits_{i=0}^{k}\mu^i\mathfrak{m}^iI^{s-i(n+1)}.$
Since for any $a\in L$, we know that $a\mu\in I^{n+1}$, we get
$L^i\mu^i\subset I^{i(n+1)}.$ By above remark, we get $$I^{(s)}\cap
\mathfrak{m}^{2s}\subset\sum\limits_{i=0}^{k}\sum\limits_{t=0}^{i}\mu^{i}L^tK^{i-t}I^{s-i(n+1)}\subset
\sum\limits_{i=0}^{k}\sum\limits_{t=0}^{i}\mu^{i-t}K^{i-t}I^{s-(i-t)(n+1)}=\sum\limits_{i=0}^{k}\mu^iK^iI^{s-i(n+1)}.$$
Since each term of the summation on the right hand side is naturally
contained in the left hand side, the reverse inclusion follows easily.
\end{proof}

We now define an ordering, called \emph{edgelex} ordering, among the
monomial generators of $I(G)^s$ and $\m^r I(G)^s$ following
\cite[Discussion 4.1]{banerjee}. This helps us in understand certain
colon ideals which are crucial in the study of regularity of powers. 

\begin{definition}\label{ordering}{\rm
Let $G$ be a graph with $E(G)=\{e_1,\dots, e_r\}$ and $I$ be its
edge ideal.  For $A, B\in G(I^s)$, we say that $A>_{\rm edgelex} B$ if
there exists an expression $A=e_{i_1}^{a_1}\cdots e_{i_r}^{a_r}$  such
that  for all expressions $e_{i_1}^{b_1}\cdots e_{i_r}^{b_r} = B$ , we
have $(a_1,\dots, a_r)>_{\rm lex}(b_1,\dots, b_r)$.

Let $J=I^s\mathfrak{m}^r$. Then for any $u, v\in G(J)$, we say that
$u>v$ if there exists an expression $u=fu'$ such that for any
expression of $v=gv'$ with $g \in G(I^s)$ and $v' \in \m^r$, we have
either $f>_{edgelex} g$ or $f=g$ and $u'>_{lex }v'$. Further, we say
$u=fu'$ is a \emph{maximal expression} of $u$ if for any other
expression $f_1u_1 = u$ with $f_1 \in G(I^s)$ and $u_1 \in \m^r$, we
have $f>_{edgelex} f_1$.
}\end{definition}

We now recall the concept of edge-division given in
\cite[Definition 4.2]{banerjee}.
Let $G$ be a graph with $E(G) = \{e_1,\dots, e_r\}$ and $I$ be its
edge ideal of $G$. Let $u\in I^s$. Then for some $j$, we say that
$e_j$ \textit{edge-divides} $u$ if there exists $v\in I^{s-1}$ such
that $u=e_jv$. We denote this by $e_j\mid^{edge}u$.

For example, if $G = C_5$ and $I(G) = (x_1x_2, x_2x_3, x_3x_4, x_4x_5,
x_1x_5) \subset \sk[x_1, \ldots, x_5]$, then $(x_4x_5)^2 >_{edgelex}
(x_1x_5)^2$. Note that with respect to the lex order, the inequality
is reverse. Also, $x_1x_2 \mid^{edge} x_1x_2^2x_3$ and $x_2x_3 \mid
x_1x_2x_3x_4$. Note that the second one is a normal division, not an
edge-division.

Most of the proofs that we do are by some type of induction.
Understanding the behavior of the colon ideal is necessary to apply
induction. We first generalize \cite[Lemma 4.11]{banerjee}.

\begin{lemma}\label{orderLem}
Let $G$ be a graph, $I$ be its edge ideal and $\m$ be the homogeneous
maximal ideal in the appropriate polynomial ring. Let $J=I^s\m^r$.
Then there exists an ordering on minimal monomial generators of $J=(
u_1,\dots, u_m)$ such that for $j< k,$  either
$(u_j:u_k)\subset I^{s+1}:u_k$ or there exists $i<k$ such that $(u_i:u_k)$
is an ideal generated by a variable and it contains $(u_j:u_k)$.
\end{lemma}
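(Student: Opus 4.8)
\textbf{Proof proposal for Lemma \ref{orderLem}.}

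The plan is to adapt the proof of \cite[Lemma 4.11]{banerjee} to the mixed monomial ideal $J = I^s\m^r$, using the \emph{edgelex}-refined ordering of Definition \ref{ordering} in place of the plain edgelex order used by Banerjee. First I would fix, for each minimal generator $u$ of $J$, a maximal expression $u = f_u u'$ with $f_u \in G(I^s)$ chosen edgelex-largest and $u' \in \m^r$; then I would order $G(J) = \{u_1, \ldots, u_m\}$ so that $u_j > u_k$ (in the sense of Definition \ref{ordering}) whenever $j < k$. The claim is that this ordering works. Given $j < k$, write $u_j = f_j u_j'$, $u_k = f_k u_k'$ for the chosen maximal expressions. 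I would split into the two cases dictated by the definition of $u_j > u_k$: either $f_j >_{edgelex} f_k$, or $f_j = f_k$ and $u_j' >_{lex} u_k'$.

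In the case $f_j = f_k$ and $u_j' >_{lex} u_k'$, the colon $(u_j : u_k)$ is generated by $u_j' / \gcd(u_j', u_k') = u_j'/(u_j'\wedge u_k')$ up to the common edge-factor, and since $u_j' >_{lex} u_k'$ there is a variable $x$ with $x \mid (u_j' : u_k')$; more precisely one shows $(u_j:u_k) \subseteq (x)$ and that $u_i := f_k \cdot (x \cdot u_k'/x')$ for the appropriate adjustment is a minimal generator of $J$ occurring earlier in the order with $(u_i : u_k) = (x)$ containing $(u_j : u_k)$. This is exactly the ``variable'' alternative in the statement, and it is the purely lexicographic part of the argument, essentially identical to the monomial-ideal bookkeeping in \cite{banerjee}.

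The substantive case is $f_j >_{edgelex} f_k$, where I want to show $(u_j : u_k) \subseteq (I^{s+1} : u_k)$. Here I would run Banerjee's edge-division argument on the $I^s$-parts: since $f_j >_{edgelex} f_k$, the monomial $f_j$ strictly dominates $f_k$ in some edge-expression, so there is an edge $e$ with $e \mid^{edge} (f_j : f_k) \cdot f_k$ and in fact $\big(f_k \cdot (f_j:f_k)\big)$ is divisible by $e$ in a way that witnesses membership in $I^{s+1}$ after multiplying by $u_k$; one then checks that multiplying $u_k$ by any generator of $(u_j:u_k)$ lands inside $e \cdot I^s \cdot \m^r \subseteq I^{s+1}$, using that the $\m^r$-factor only adds variables and cannot destroy edge-divisibility. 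The one point requiring care — and I expect this to be the main obstacle — is that the expression $u_j = f_j u_j'$ is only one among many, so I must use the \emph{maximality} of the chosen expression to guarantee that no alternative factorization of $u_j$ (or of $u_k$) undermines the edge-divisibility conclusion; this is precisely where the "maximal expression" clause of Definition \ref{ordering} is used, and it is also the step that forced the erratum acknowledged in the introduction, so it must be handled with the refined ordering rather than the naive one. Once the edge $e$ is produced, the inclusion $(u_j:u_k) \subseteq (I^{s+1}:u_k)$ follows by the same colon-ideal computation as in \cite[Lemma 4.11]{banerjee}, completing the induction.
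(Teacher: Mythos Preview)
Your dichotomy is too clean, and the claim you make in the ``substantive case'' is simply false. You assert that whenever $f_j >_{\rm edgelex} f_k$ one always has $(u_j:u_k)\subseteq I^{s+1}:u_k$, but already with $r=0$ this fails: take $G$ the path $x_1\!-\!x_2\!-\!x_3$ with $e_1=x_1x_2>_{\rm edgelex} e_2=x_2x_3$ and $s=1$; then $(e_1:e_2)=(x_1)$, while $x_1x_2x_3\notin I^2$, so $(x_1)\not\subset I^2:e_2$. The lemma holds here via the \emph{other} alternative (with $i=j$, since $(x_1)$ is already variable-generated). Thus the case $f_j>_{\rm edgelex}f_k$ does not route exclusively to the $I^{s+1}$-containment; both alternatives genuinely occur there, and deciding which one applies requires a fine case analysis that your sketch does not supply. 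The hand-wave ``there is an edge $e$ with $e\mid^{\rm edge}(f_j:f_k)\cdot f_k$\ldots'' is exactly where this breaks: no such edge need exist.

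The paper does not proceed by your two-case split. It argues by induction on $(s,r)$ in the lexicographic order, with base case $r=0$ given by \cite[Lemma 4.11]{banerjee}. One fixes the maximal edge $ab$ with $ab\mid^{\rm edge}f_1$ and branches on whether $ab\mid^{\rm edge}f_2$: if so, cancel $ab$ from both sides and invoke induction on $(s-1,r)$; if not, a lengthy case analysis on how $a$ and $b$ sit inside $v_2$ versus $f_2$ (repeatedly using maximality of the expression $u_k=f_2v_2$ to rule out configurations) either produces an explicit $u_i>u_k$ with $(u_i:u_k)$ generated by a single variable, or locates a second edge $xy\mid^{\rm edge}f_1$ and reduces again to an instance with smaller $s$. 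Your proposal has neither this inductive scaffolding nor the case analysis that makes it go through; the ``$f_j=f_k$'' case you isolate is in fact absorbed into the first branch of the induction rather than handled separately.
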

\begin{proof}
Consider the ordering on $G(J)$ given in Definition \ref{ordering}.
We prove the result by using induction on $(s,r)$. For $j<k$, let
$u_j=f_1v_1$ and $u_k=f_2v_2$ be maximal expressions, where
$f_1,f_2\in I^s$ and $v_1,v_2\in \m^r$.
	
If $r = 0$, then the assertion follows from \cite[Lemma
4.11]{banerjee}. In particular, if $(s,r)=(1,0)$, then the assertion
holds true.  Assume by induction that the assertion is true for all
$(s_1,r_1) <_{lex}(s,r)$. 
	
Let $ab$ be the maximal edge such that $ab \mid^{edge} f_1$. If
$ab\mid^{edge} f_2$, then write $f_1 = ab f_1'$ and $f_2 = ab f_2'$
for some $f_1', f_2' \in I^{s-1}$. Then $u_j' = u_j/ab = f_1'v_1$ and
$u_k' = u_k/ab = f_2'v_2$ are in $I^{s-1}\m^r$. Moreover, $\left(u_j'
:u_k'\right) = (u_j : u_k)$. By induction, either $(u_j' : u_k')
\subseteq (I^s : u_k')$ or there exists and $i < k$ such that $(u_i' :
u_k')$ is an ideal generated by variables and it contains $(u_j' :
u_k')$. If $(u_j' : u_k') \subseteq (I^s : u_k')$, then clearly $(u_j
: u_k) \subseteq (I^{s+1} : u_k)$. Suppose there exists an $i < k$
such that $(u_i' : u_k')$ is generated by a variable. Clearly $(abu_i'
: u_k) = (u_i' : u_k')$. Hence it is enough to show that $abu_i' >
u_k$ and set $u_i = abu_i'$. But this is obvious since $u_i' \in
G(I^{s-1}\m^r)$, $ab$ is an edge and $ab$ edge divides $f_2$.

Now we assume that $ab \nmid^{edge} f_2$.  If $\gcd(ab, u_k)=1$, then
$(u_j:u_k)\subset ( ab) \subset (I^{s+1} : u_k).$ Hence
the assertion follows.  Suppose $\gcd(ab, u_k) \neq 1$.  Consider the
case when $a\mid u_k$. If $a\mid v_2$, then we claim that $b\nmid u_k$. 
Suppose $b\mid u_k$. If $b \mid v_2$, then we can write $u_k =
v_2'f_2'$, where $f_2' = \frac{ab}{e_j}f_2$ for some edge $e_j$ with
$e_j \mid^{edge} f_2$ and $v_2' = \frac{e_j}{ab}v_2$. Since $f_2'
>_{edgelex} f_2$, the expression $u_k = v_2f_2$ is not maximal
which is a contradiction. Hence $b \nmid v_2$ so that $b | f_2$.
Let $b'$ be such that
$bb'\mid^{edge}f_2$. This implies that $ab\mid^{edge}\dfrac{af_2}{b'}$
and $\dfrac{af_2}{b'}\mid u_k$. Note that
$\dfrac{af_2}{b'}>_{edgelex}f_2$, and hence $f_2v_2$ is not a maximal
expression which is a contradiction to our assumption.
This implies that $b \nmid u_k$ and $(u_j:u_k)\subset ( b)\subset I^{s+1}:u_k$.
If $a \mid f_2$, then, as in the earlier case, we get $b \nmid v_2$.
Then there exists a vertex $c$ such that $ac |^{edge} f_2$. 
Suppose $(u_j : u_k) \subset (b)$. Write $f_1=abf_1'$
and $f_2=acf_2'$ and take $u_i=abf_2'v_2$. Hence $u_i>u_k$ and
$(u_i:u_k) = (b)\supset (u_j:u_k)$. 
Suppose $(u_j:u_k)\not\subset (b)$. Since $b \mid u_j, ~b \mid u_k$.
Also, $b \nmid v_2$. Therefore, $b \mid f_2$ and there exists a vertex
$d$ such that $bd\mid^{edge}f_2$.
If $(u_j : u_k) \subseteq (a)$, then by the symmetry of arguments, we
get $(u_i : u_k) = (a) \supset (u_j : u_k)$, where $u_i =
ab\dfrac{f_2}{bd}v_2$. Hence, for the rest of the proof we may assume
that neither $a$ nor $b$ divides $(u_j : u_k)$.

Let $(u_j:u_k)=(w)$. If $\gcd(f_1,w)=1$, then $w\mid v_1$.
Let $w=xw'$, where $x$ is a variable, and take $w_1$ such that
$w_1\mid \dfrac{u_k}{\gcd(u_j,u_k)}$ with
$\deg(w_1)=\deg(w')$. Set $u_i =\dfrac{u_jw_1}{w'}$. Since
$f_1>_{edgelex}f_2$, we have $u_i>u_k$, and $(u_i:u_k)=( x)$ which contains $w$.
	
Suppose $\gcd(f_1, w) \neq 1$. Let $x$ be a vertex such that $x\mid w $ and $x\mid
f_1$. Note that $x \neq a$. Since $x \mid f_1$, there exists $y$ such that $xy\mid^{edge}f_1$.
If $y$ does not divide $u_k$, then $(u_j:u_k)\subset (xy)$. Since $xy$
is an edge, this implies that $xy u_k \in I^{s+1}$, i.e., $xy \in
I^{s+1} : u_k$. Hence $(u_j : u_k) \subset I^{s+1} : u_k$. Now assume
that $y \mid u_k$. If $y\mid v_2$, then $xv_2f_2 \in I^{s+1}$, since
$xy$ is an edge and $f_2 \in I^s$. Therefore, $x \in I^{s+1} : u_k$. Hence
$(u_j:u_k)\subset ( x ) \subseteq I^{s+1}:u_k$.  If $y\mid f_2$, then
there exists $z$ such that $yz\mid^{edge}f_2$.  Write $f_1=abxyf_1''$
and $f_2=acyzf_2''$ for some $f_1'', f_2'' \in I^{s-2}$.  Since
$(u_j:u_k)=(w)$, we get $u_j \mid wu_k$, and hence $abf_1''v_1\mid
w'zacf_2''v_2$. This implies that $(w'z)\subset (abf_1''v_1:
acf_2''v_2)$.  Let $(abf_1''v_1: acf_2''v_2) = (w_1')$. This gives us
$abf_1''v_1 \mid w_1acf_2''v_2$, and hence $u_j\mid w_1xu_k$ which
forces that $w'x\mid w_1x$. This implies that $(abf_1''v_1:
acf_2''v_2)$ is equal either to $(w')$ or to $(w'z)$.  Note
that $abf_1''>_{edgelex} acf_2''$. Therefore by induction
$(abf_1''v_1: acf_2''v_2)\subset I^{s}:acf_2''v_2$ or
there exists $u'\in G(I^{s-1}\m^r)$ such that $(u':acf_2''v_2)$ is
generated by a variable and it contains $(abf_1''v_1: acf_2''v_2)$.
	 
Assume that $(abf_1''v_1: acf_2''v_2)= (w')\supset (u_j:u_k)$. Suppose
$(abf_1''v_1: acf_2''v_2)\subset I^{s}:acf_2''v_2$. This implies
that $(u_j:u_k)\subset(abf_1''v_1: acf_2''v_2)\subset I^{s+1}: u_k$.
Suppose there exists $u' \in G(I^{s-1}\m^r)$ such that 
$(u':acf_2''v_2)=(l)$ for some variable $l$
which divides $w'$. Therefore, by taking $u_i = yzu'$, we get
$(u_i:u_k)=(u':acf_2''v_2)=(l)$ which divides $w'$, and hence $w$.
	 
Suppose $(abf_1''v_1: acf_2''v_2)= (w'z)$. If $(abf_1''v_1:
acf_2''v_2)\subset I^{s}:acf_2''v_2$, i.e., $w'zacf_2''v_2\in I^s$,
then $w'zxyacf_2''v_2\in I^{s+1}$, i.e., $wu_k\in I^{s+1}$. 
Suppose there exists $u' \in G(I^{s-1}\m^r)$ such that
$(u':acf_2''v_2)=(l)$, where $l$ is a variable and $l\mid w'z$. If
$l=z$,  then take $u_i =xyu'$. This gives us $(u_i :u_k)=(x)$. If
$l\neq z$, then take $u_i =yzu'$. Then we get $(u_i :u_k)=(l)$. In
both cases, $(u_i :u_k)$ is generated by a variable and it contains $(u_j:u_k)$ which completes the proof.
\end{proof}

We now recall the definition of even-connection introduced by Banerjee
in \cite{banerjee}.

\begin{definition}{\rm
	Let $G$ be a graph and $x$ and $y$ be vertices of $G$. Then we say that $x$ and $y$ are \emph{even connected} with respect to $u=e_1\cdots e_s$ if there is a path $p_0p_1\cdots p_{2k+1}$, $k\geq 1$ in $G$ such that 
	\begin{enumerate}[i)]
		\item $p_0=x$ and $p_{2k+1}=y$.
		\item For all $1\leq l\leq k$, we have $p_{2l-1}p_{2l}=e_i$ for some $i$.
		\item For all $i$, we have $|\{l\geq0:p_{2l-1}p_{2l}=e_i \}|\leq|\{j:e_j=e_i\}|.$
	\end{enumerate}
}\end{definition}
One of the most important property of the even connection is that it
describes the generators of the colon ideal $I^s : u$.
\begin{theorem}\cite[Theorem 6.7]{banerjee}\label{evenConThm}
Let $G$ be a graph and $I$ be its edge ideal. Let $u\in G(I^{s-1})$.
Then $I^s:u=I+( xy: x \text{ is even connected to } y \text{ with respect to } u).$
\end{theorem}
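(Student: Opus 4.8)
The plan is to establish the two inclusions separately: the containment $I+(xy : x \text{ is even connected to } y \text{ with respect to } u)\subseteq I^s:u$ is a direct verification, while the reverse inclusion carries the weight of the argument. Throughout write $u=e_1\cdots e_{s-1}$ with each $e_j\in E(G)$; since $I$ is generated in degree $2$, every product of $s-1$ edges is a minimal generator of $I^{s-1}$, so this causes no loss of generality.

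For the easy inclusion, note first that $u\in I^{s-1}$ gives $Iu\subseteq I^s$. Now suppose $x=p_0,p_1,\dots,p_{2k+1}=y$ is an even connection with respect to $u$. Condition (iii) in the definition says precisely that the $k$ edges $p_1p_2,p_3p_4,\dots,p_{2k-1}p_{2k}$ occur among $e_1,\dots,e_{s-1}$ counted with multiplicity (and in particular $k\le s-1$), so we may write $u=(p_1p_2)(p_3p_4)\cdots(p_{2k-1}p_{2k})\,u'$ with $u'$ a product of $s-1-k$ edges. Then
\[
xy\cdot u=\Big[(p_0p_1)(p_2p_3)(p_4p_5)\cdots(p_{2k}p_{2k+1})\Big]\,u',
\]
and since each $p_{2l}p_{2l+1}$ is an edge of $G$, this exhibits $xy\cdot u$ as an element of $I^{k+1}I^{s-1-k}=I^s$. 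Hence $xy\in I^s:u$.

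For the hard inclusion $I^s:u\subseteq I+(xy:\dots)$, I would induct on $s$. When $s=1$ we have $u=1$ and $I^s:u=I$, while no even connection exists (a connecting path must draw at least one edge from $u$), so there is nothing to prove; it is also convenient to first settle $s=2$, i.e.\ the colon by a single edge $e=ab$ — where "$x$ even connected to $y$ with respect to $e$" simply means $x\in N_G(a),\,y\in N_G(b)$ or vice versa — by a direct monomial chase. For the inductive step it suffices to take a monomial $w$ with $wu\in I^s$ and $w\notin I$ and produce vertices $x,y$, even connected with respect to $u$, with $xy\mid w$; one may assume $w$ of minimal degree with this property. Fix an expression $wu=g_1\cdots g_s\,h$ with each $g_i\in E(G)$ and $h$ a monomial. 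Since $\supp(w)$ is independent, each $g_i$ has an endpoint $b_i\notin\supp(w)$, and the copy of $b_i$ consumed by $g_i$ must come from $u$ rather than from $w$. If the factorization can be chosen so that some $g_i$ equals an edge $e_j$ of $u$, cancel it: then $w\in I^{s-1}:(u/e_j)$ with $u/e_j\in G(I^{s-2})$, and the induction hypothesis (using $w\notin I$) gives a pair $x,y$ even connected with respect to $u/e_j$, hence also with respect to $u$, with $xy\mid w$. In the remaining case no $g_i$ equals an edge of $u$, and a careful analysis of how the outer endpoints $b_i$ are supplied by the $s-1$ edges of $u$ produces a vertex $c$ lying on an edge of $u$ together with two of the $g_i$'s, from which one reads off the two ends $p_0,p_{2k+1}$ of an even connection with $p_0p_{2k+1}\mid w$; the $s=2$ computation is the model, now propagated one vertex at a time along $u$.

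The main obstacle is precisely this last step: converting the overlap pattern between the factorization $g_1\cdots g_s$ and the edges $e_1,\dots,e_{s-1}$ of $u$ into an honest path $p_0\cdots p_{2k+1}$ whose edges respect the multiplicity bound (iii), and disposing of the degenerate cases — when the path closes up with $p_0=p_{2k+1}$ (so one needs $p_0^2\mid w$), when $u$ has repeated edges, and when several $g_i$'s must be supplied by the same edge of $u$. I expect the bookkeeping here to require a case division modelled on the single-edge argument, applied inductively.
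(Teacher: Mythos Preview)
The paper does not prove this statement at all: Theorem~\ref{evenConThm} is quoted verbatim from \cite[Theorem~6.7]{banerjee} and used as a black box in the subsequent lemmas. There is therefore no ``paper's own proof'' to compare your proposal against.

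As for the proposal itself: the easy inclusion is fine and is exactly the standard verification. The hard inclusion, however, is not a proof but an outline with an acknowledged hole. Your inductive reduction works cleanly when some $g_i$ coincides (as a monomial) with some $e_j$ of $u$, since then $w\in I^{s-1}:(u/e_j)$ and an even connection with respect to $u/e_j$ is automatically one with respect to $u$. The substantive case is the residual one---no factorization of $wu$ has a $g_i$ equal to an edge of $u$---and here you only promise that ``a careful analysis \dots\ produces a vertex $c$'' and that you ``expect the bookkeeping here to require a case division.'' That is precisely where the content of Banerjee's argument lives: one must actually build the path $p_0p_1\cdots p_{2k+1}$, verify the multiplicity constraint~(iii), and handle the degeneracies you list (closed paths forcing a square $p_0^2\mid w$, repeated edges in $u$, several $g_i$ drawing on the same $e_j$). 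None of this is carried out, so as written the reverse inclusion is not established. If you intend to supply an independent proof rather than cite \cite{banerjee}, this gap must be closed; otherwise a citation suffices, as in the present paper.
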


We further analyze the even-connected edges in
this class of edge ideals and certain colon ideals which come up in
the induction step.
\begin{lemma}\label{leafOrderLem}
Let $G$ be a graph obtained by taking the clique-sum along the
vertices or edges of an odd cycle $C_{2n+1}$ and some bipartite
graphs. Let $\{z_1, \ldots, z_m\} = V(G) \setminus N_G(C_{2n+1})$. 
Assume that $z_i$ is not part of any cycle for all $i = 1, \ldots,
m$.  Then there exists an ordering on $G(I^s)=\{
u_1,\dots, u_r\} $ such that if $z_i$ and $z_j$ are
even-connected with respect to $u_t$ for some $1 \leq t \leq r$, then
there exists $u_s> u_t$ such
that $(u_s:u_t)=(z_k)$, where $k = \min \{i,j\}$.   
\end{lemma}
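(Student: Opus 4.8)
The plan is to set up the ordering on $G(I^s)$ via the edgelex order from Definition \ref{ordering} and then analyze an even-connecting path between $z_i$ and $z_j$. Suppose $z_i$ and $z_j$ are even connected with respect to $u_t = e_1 \cdots e_s$ via a path $p_0 p_1 \cdots p_{2\ell+1}$ with $p_0 = z_i$ and $p_{2\ell+1} = z_j$. The first observation to exploit is that since $z_i$ is not part of any cycle in $G$, the vertex $z_i$ is a ``leaf-like'' vertex in the sense that its neighborhood interacts with the rest of $G$ only through a tree structure; in particular the edge $p_0 p_1 = z_i p_1$ appearing in the path is a bridge, and by condition (ii) and (iii) of even-connection this edge must be one of the $e_\nu$'s, say $e_1 = z_i p_1$ after relabeling. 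I would then want to argue that $z_j$ (or the vertex $z_k$ with $k = \min\{i,j\}$) can be ``split off'' from $u_t$: concretely, the goal is to produce a generator $u_s \in G(I^s)$ with $u_s = z_k \cdot (\text{something})$ such that $(u_s : u_t) = (z_k)$ and $u_s >_{\text{edgelex}} u_t$.

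The key construction is as follows. Since $e_1 = z_i p_1 \mid^{edge} u_t$ (assuming WLOG $k = i$; the case $k=j$ is symmetric using the other end of the path), write $u_t = z_i p_1 \cdot w$ with $w \in G(I^{s-1})$. Now $p_1 \in N_G(C_{2n+1})$ or $p_1$ is adjacent to some $z$-vertex; in any case $p_1$ is incident to some edge $f$ of $G$ with $f \neq z_i p_1$. Set $u_s = z_i \cdot p_1 \cdot p_1' \cdot w'$... more carefully: I want $u_s$ to differ from $u_t$ by removing a factor of $z_i$'s ``partner'' and re-attaching. The cleanest approach: because $z_i$ is not on a cycle, in any monomial $u_t$ divisible by $z_i$, the variable $z_i$ can only be matched to $p_1$ (its unique ``cycle-side'' neighbor along the bridge) — actually $z_i$ may have several neighbors, but all lie on the tree part. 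I would pick the edge $e_1 = z_i p_1$ used by the path, and then since $p_1$ has degree $\geq 2$ choose another edge $g = p_1 q$ incident to $p_1$; then $u_s := (z_i)^{?}\ldots$. The honest construction mimicking \cite[Lemma 4.11]{banerjee} and Lemma \ref{orderLem}: replace in $u_t$ the edge $e_1 = z_i p_1$ by the ``path swap'' that frees $z_i$ — i.e., use the even-connecting path itself to rewrite $u_t \cdot z_i z_j$ as a product of $s$ edges (this is exactly Theorem \ref{evenConThm}: $z_i z_j \in I^s : u_t$ means $z_i z_j u_t \in I^{s+1}$). From a decomposition $z_i z_j u_t = e'_1 \cdots e'_{s+1}$ one of the $e'$'s is $z_i p_1$; removing it gives $u_s' \in G(I^s)$ with $z_j \mid u_s'$ and $z_j u_t \in I^s \cdot (\ldots)$, so $(u_s' : u_t) \supseteq (z_j)$... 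I would then need to argue this colon is \emph{exactly} $(z_j)$ (or $(z_i)$) and that $u_s' >_{\text{edgelex}} u_t$, choosing the decomposition to maximize the edgelex-leading term.

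The ordering claim — ``there exists an ordering'' — gives freedom: I would order $G(I^s)$ by $>_{\text{edgelex}}$, and then the content is that the \emph{specific} witness $u_s$ produced above is edgelex-larger than $u_t$. This holds because the swap replaces a ``short'' matching of $z_i$ (to the bridge $p_1$) followed by the rest of the path with a decomposition whose leading edges are lexicographically larger — here one must fix the labeling $e_1 = z_i p_1$ to be the edgelex-largest edge incident to $z_i$, which is forced since $z_i$ is on no cycle and hence essentially has a unique relevant incident edge direction toward $C_{2n+1}$. Minimality ($k = \min\{i,j\}$) comes from always choosing to free the end of the path with the smaller index, which is possible because the path is reversible and both $p_0 p_1$ and $p_{2\ell} p_{2\ell+1}$ are bridge-type edges belonging to $u_t$.

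The main obstacle I anticipate is verifying that $(u_s : u_t)$ is \emph{principal generated by exactly $z_k$} rather than something smaller, and simultaneously that $u_s >_{\text{edgelex}} u_t$ — these two requirements pull against each other (a larger $u_s$ may share fewer variables with $u_t$, enlarging the colon beyond $(z_k)$). Resolving this requires carefully choosing, among all rewritings $z_i z_j u_t = e'_1\cdots e'_{s+1}$, one where only the single bridge edge at $z_i$ is changed and everything else in $u_t$ is preserved — exploiting precisely the hypothesis that $z_i$ sits on no cycle, so there is no ``competing'' way to route the extra $z_i$-factor. I expect the bulk of the write-up to be a careful case analysis on whether the even-connecting path's interior vertices lie in $\{z_1,\ldots,z_m\}$, in $N_G(C_{2n+1})$, or on the cycle, handled much as in the proof of Lemma \ref{orderLem} and in \cite[Lemma 4.11, Theorem 6.7]{banerjee}.
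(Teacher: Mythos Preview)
Your proposal has a genuine gap at two linked points.

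First, you misread the even-connection definition: in a path $p_0p_1\cdots p_{2\ell+1}$ with $p_0=z_i$, $p_{2\ell+1}=z_j$, condition (ii) says the edges $p_1p_2, p_3p_4,\ldots, p_{2\ell-1}p_{2\ell}$ are among the factors $e_\nu$ of $u_t$, \emph{not} the terminal edge $p_0p_1=z_ip_1$. So your claim ``$e_1=z_ip_1\mid^{edge} u_t$'' is exactly backwards, and the constructions you build on it (writing $u_t=z_ip_1\cdot w$, etc.) collapse. The correct move is the opposite swap: set $u_s = (z_ip_1)\cdot \dfrac{u_t}{p_1p_2}$, replacing the internal edge $p_1p_2$ (which \emph{does} edge-divide $u_t$) by the external edge $z_ip_1$. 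Then $u_s$ and $u_t$ differ only by a factor $z_i$ versus $p_2$, giving $(u_s:u_t)=(z_i)$ immediately.

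Second, and more fundamentally, you never specify the edge/vertex order underlying your edgelex, and without the right choice the inequality $u_s>u_t$ (equivalently $z_ip_1>p_1p_2$) can fail. The paper's key idea --- which your proposal does not contain --- is a \emph{leaf-pruning} order: since the induced subgraph on $\{z_1,\ldots,z_m\}$ is a forest, relabel so that $z_1$ is a leaf of $G$, $z_2$ a leaf of $G\setminus\{z_1\}$, and so on; declare the edges $e_i$ incident to $z_i$ at each pruning step to satisfy $e_1>\cdots>e_m>$ (all other edges). With this order, if $z_ip_1<p_1p_2$ then $p_2=z_{i_1}$ with $i_1<i$; but $z_{i_1}$ was a pendant after removing $z_1,\ldots,z_{i_1-1}$, forcing $p_3=z_{i_2}$ with $i_2<i_1$, and inductively $z_j=p_{2\ell+1}>z_i$, contradicting $i=\min\{i,j\}$. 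Thus $z_ip_1>p_1p_2$ and $u_s>_{\rm edgelex}u_t$. No case analysis on where interior vertices of the path lie is needed; the leaf-pruning order does all the work, and the proof is a few lines rather than the extended analysis you anticipate.
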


\begin{proof}
Since $z_i$ is not part of any cycle, it follows that the induced
subgraph on $V(G) \setminus \ N_G(C_{2n+1})$ is a forest. After a
re-ordering of the vertices, assume that
$e_1$ is a leaf in $G$ having pendant vertex $z_{1}$ and $e_i$ is a
leaf in $G \setminus \{e_1, \ldots, e_{i-1}\}$ with pendant vertex
$z_i$, for $i = 2, \ldots, m$.  Set $z_1 > \cdots > z_m$, $e_1 > \cdots > e_m$ and on $E(G)
\setminus \{e_1, \ldots, e_m\}$, set the lexicographic ordering with
$y_1 > \cdots > y_l > x_1 > \cdots > x_{2n+1}$ and such
that for any $e \in E(G) \setminus \{e_1, \ldots, e_m\}$, $e_m > e$.
Now, take the edgelex ordering on $I^s$.
	
Suppose $z_i$ and $z_j$ are even
connected with respect to $u_l =e_{i_1}\cdots e_{i_s}$. Without loss
of generality, we may assume that $i < j$. Hence $z_j < z_i$ and $e_j < e_i$. Let 
$z_ip_1\cdots p_{2k}z_j$, $k\geq 1$ be an even-connection in $G$. 

We claim that $z_ip_1>p_1p_2$. If $z_ip_1 < p_1p_2$, then $z_i < p_2$.
This implies that $p_2 = z_{i_1}$ for some $i_1 < i$. Since $z_{i_1}$
is obtained as a pendant vertex after removing $z_1, \ldots,
z_{i_1-1}$ and both $z_i$ and $p_1$ are less than $z_{i_1}$, $p_3 =
z_{i_2}$ for some $i_2 < i_1$. Continuing like this, we obtain that
$p_{2k+1} = z_j > z_i$ which is a contradiction to our assumption that
$i < j$. Hence $z_ip_1 > p_1p_2$.  
Set $u_s = z_ip_1\dfrac{u_l}{p_1p_2}$. Then $u_s >u_t$ and $(u_s :
u_t) = z_i$.
\end{proof}

\begin{remark}\label{remark}{\rm
Let $f=\mu^igu\in G(\mu^iK^iI^{s-i(n+1)})$ and $M=\supp(g)$. Then we
have the following:
\begin{enumerate}[i)]
  \item Let $l\in M$ and $l'\in N_G(l)$. This implies
	$\dfrac{\mu}{x_j}ll'u\in I^{s-(i-1)(n+1)}$ for any $j$. Hence
	$l'\mu^igu = \dfrac{g}{l}ll'\mu^iu \in
	\mu^{i-1}K^{i-1}I^{s-(i-1)(n+1)}$ which shows that $N_G(M)\subset
	\mu^{i-1}K^{i-1}I^{s-(i-1)(n+1)}:f$.
  \item Let $l\in M \cup V(C_n)$ and $l'\in V(G)$ such that $l$ and $l'$ is an
	even connection with respect to $\dfrac{\mu u}{x_j}$ for some $j$.
	Hence $\dfrac{\mu}{x_j}ll'u\in I^{s-(i-1)(n+1)}$ for some $j$.
	Hence $l'\mu^igu\in \mu^{i-1}K^{i-1}I^{s-(i-1)(n+1)}$ which shows
	that $l'\in \mu^{i-1}K^{i-1}I^{s-(i-1)(n+1)}:f$.
\end{enumerate}
}\end{remark}

To understand the colon with symbolic power, we study
the colon with ideals in the decomposition of the symbolic power.
\begin{lemma}\label{colonLemma}
Let $G$ be as in Lemma \ref{leafOrderLem} and $f=\mu^igu\in
G(\mu^iK^iI^{s-i(n+1)})$ for $1\leq i\leq k$ with $f\notin
\mu^{i-1}K^{i-1}I^{s-(i-1)(n+1)}$, where $u\in I^{s-i(n+1)}$. Then $$
\mu^{i-1}K^{i-1}I^{s-(i-1)(n+1)}:f=I + L',$$
where $L'$ is an ideal containing $L$ and generated by a set of
variables.
\end{lemma}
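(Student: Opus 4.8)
The plan is to write $A=\mu^{i-1}K^{i-1}I^{s-(i-1)(n+1)}$ and to let $L'$ be the ideal generated by all single variables that lie in $(A:f)$; then $L'\subseteq(A:f)$ is automatic, so it remains to establish two things. (a) $I\subseteq(A:f)$ and $L\subseteq(A:f)$ — this gives both $I+L'\subseteq(A:f)$ and $L\subseteq L'$. (b) Every minimal monomial generator of $(A:f)$ is either divisible by an edge of $G$ or is a single variable — this gives $(A:f)\subseteq I+L'$, and the two inclusions together give the statement. Throughout I would use that every minimal generator of $A$, and $f$ itself, has degree $2s$, and that (by the ambient powers) a minimal generator $\mu^{i-1}g_1u_1$ of $A$ has $g_1\in G(K^{i-1})$ in the $z$-variables and $u_1\in G(I^{s-(i-1)(n+1)})$ a product of exactly $n+1$ more edges than the factor $u$ of $f$; since $f\notin A$, the colon $(A:f)$ is proper.

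For (a), the key elementary fact is that $\mu a\in I^{n+1}$ for every vertex $a\in N_G(C_{2n+1})$: if $a=x_p$ is a cycle vertex, then $\mu x_p$ is the product of the $n+1$ consecutive edges of $C_{2n+1}$ starting and ending at $x_p$; if $a=y_j$, choosing a cycle neighbour $x_p$ of $y_j$, the edge $x_py_j$ together with a perfect matching of $n$ edges of the path $C_{2n+1}\setminus\{x_p\}$ has product $\mu y_j$. Since $g\in G(K^i)$ gives $g\in K^{i-1}$ and $u\in I^{s-i(n+1)}$, for $a\in N_G(C_{2n+1})$ we obtain $af=\mu^{i-1}(\mu a)gu\in\mu^{i-1}I^{n+1}K^{i-1}I^{s-i(n+1)}\subseteq A$, so $L\subseteq(A:f)$. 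For an arbitrary edge $e$ of $G$: if some endpoint $a$ of $e$ lies in $N_G(C_{2n+1})$ then $ef=(e/a)af\in A$; otherwise both endpoints of $e$ are among $z_1,\dots,z_m$ (no $z_i$ is adjacent to the cycle), say $e=z_pz_q$, and then $\mu z_pz_q\in I^{n+1}$ since it is divisible by the edge $z_pz_q$ together with a matching of $n$ edges in $C_{2n+1}$, so again $ef=\mu^{i-1}(\mu z_pz_q)gu\in A$. Hence $I\subseteq(A:f)$.

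For (b), I would argue by contradiction: suppose $w$ is a minimal generator of $(A:f)$ with $w\notin I$ and $\deg w\geq 2$. Since $L\subseteq(A:f)$, no cycle vertex and no $y_j$ divides $w$, so $\supp(w)\subseteq\{z_1,\dots,z_m\}$, and $w\notin I$ forces $\supp(w)$ to be an independent set. As no $z_i$ lies on a cycle, the induced subgraph on $\{z_1,\dots,z_m\}$ is a forest, so I would choose $z_p\in\supp(w)$ which is a leaf (or isolated vertex) of this forest, as in the proof of Lemma \ref{leafOrderLem}. From $wf\in A$, fix a minimal generator $\mu^{i-1}g_1u_1$ of $A$ dividing $w\mu^igu$; comparing exponents of the cycle variables and the $y_j$'s (and using $\supp(w)\subseteq\{z_i\}$) shows $u_1$ uses at most one extra copy of each cycle vertex and no extra copies of any $y_j$, while carrying $n+1$ more edges than $u$. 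I would then track how the copies of the leaf $z_p$ must be absorbed into $u_1$ — they can only appear inside $g_1$ or as endpoints of edges of $u_1$ incident to $z_p$ — and feed the resulting matching data into Theorem \ref{evenConThm}, applied to colons of the form $I^{s-(i-1)(n+1)}:(\mu u/x_l)$ exactly as in Remark \ref{remark}, together with the ordering supplied by Lemma \ref{leafOrderLem}. This produces either an adjacency of $z_p$ with a $z$-variable of $M=\supp(g)$, or an even connection (with respect to some $\mu u/x_l$) joining $z_p$ to a vertex of $M\cup V(C_{2n+1})$; in either case Remark \ref{remark} forces $z_p\in(A:f)$. Since $z_p\mid w$ and $\deg w\geq 2$, this contradicts the minimality of $w$, which finishes (b) and the lemma.

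The hard part is this last step: extracting from the bare divisibility $\mu^{i-1}g_1u_1\mid w\mu^igu$ an honest even-connected path reaching the chosen leaf $z_p$, while respecting the multiplicity constraints in the definition of even connection. The forest hypothesis on the $z$-variables is exactly what makes this possible, since choosing $z_p$ to be a leaf prevents the relevant path from getting trapped among the $z$-variables — this is precisely the combinatorial input isolated in Lemma \ref{leafOrderLem} and Remark \ref{remark}, and the matching bookkeeping along the path is the delicate but essentially routine remainder.
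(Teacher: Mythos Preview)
Your part (a) is correct and in fact slightly more explicit than the paper, which does not verify $I\subseteq(A:f)$ directly but recovers it from the final description.

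In part (b), however, there is a genuine gap. Your plan is to pick a divisor $\mu^{i-1}g_1u_1\mid wf$ and ``track how the copies of the leaf $z_p$ must be absorbed into $u_1$,'' then invoke Theorem~\ref{evenConThm} and Remark~\ref{remark} to conclude $z_p\in(A:f)$. But Theorem~\ref{evenConThm} applies only to colons of the shape $I^{t+1}:e_1\cdots e_t$, and $(A:f)=\mu^{i-1}K^{i-1}I^{s-(i-1)(n+1)}:\mu^igu$ is \emph{not} of this shape: the factor $g\in K^i$ is a product of $z$-variables forming an independent set, not a product of edges. The paper does real work here that you are skipping: it first decomposes
\[
A:f \;=\; \sum_{g_j\mid g}\; I^{s-(i-1)(n+1)}:\mu l_j u \;+\; \sum_{g_j\nmid g}\; g_jI^{s-(i-1)(n+1)}:\mu g u,
\]
shows the second sum is contained in the first, and only then applies Banerjee's description to $I^{s-(i-1)(n+1)}:\tfrac{\mu u}{x_a}$ after peeling off the extra factor $x_a l_j$. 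That peeling step is exactly what generates the case analysis (``$w_2=x_a$'' vs.\ ``$w_2=l_j$'') that feeds into Remark~\ref{remark}. Your divisibility $\mu^{i-1}g_1u_1\mid wf$ does not by itself produce an even-connected path reaching $z_p$; it gives you a candidate $u_1$, but extracting from $u_1$ a Banerjee-type colon and a degree-$2$ witness requires precisely the decomposition you omitted. Likewise, Remark~\ref{remark} only places $l'$ in $(A:f)$ when $l'$ is adjacent or even-connected to $M\cup V(C_{2n+1})$; it says nothing about even connections between two $z$-variables outside $M$, which is the case your minimal-counterexample argument must confront. Calling this ``essentially routine'' understates it: the decomposition over the generators of $K^{i-1}$ is the substance of the proof, not post-processing.
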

\begin{proof}
Note that for any $a \in L$, we know that $a\mu\in I^{n+1}$, and
hence we get  $$L\subset \mu^{i-1}K^{i-1}I^{s-(i-1)(n+1)}:f.$$ 
	
We first claim
that $I^{s-(i-1)(n+1)} : \mu u$ is generated in degree at most $2$. 
Since $\dfrac{\mu u}{x_i}\in I^{s-(i-1)(n+1)-1}$, the ideal
$I^{s-(i-1)(n+1)} :\dfrac{\mu u}{x_i}$ is of the form $I^{t+1} : e_1
\cdots e_t$ which is generated in degree $2$.
Let $v\in I^{s-(i-1)(n+1)} : \mu u$. This implies that
$vx_i\in I^{s-(i-1)(n+1)} :\dfrac{\mu u}{x_i}$. Thus there exists a
monomial $v'$ of degree $2$ such that $v'\mid vx_i$.  If $x_i \nmid
v'$, then $v' \mid v$. If $x_i\mid v'$, then $v'/x_i\in
I^{s-(i-1)(n+1)} : \mu u$. Hence $I^{s-(i-1)(n+1)} : \mu u$ is
generated in at most degree 2. Suppose $v \in G(I^{s-(i-1)(n+1)} : \mu
u)$ such that $v \mid g$. Suppose $\deg(v) = 1$. Then $f =
\mu^{i-1}\dfrac{g}{v}v\mu u \in \mu^{i-1}K^{i-1}I^{s-(i-1)(n+1)}$
which is a contradiction to our assumption that $f$ is not in that
ideal. Hence $\deg(v) = 2$. Then one can see as
above that $f \in \mu^{i-1}K^{i-2}I^{s-(i-1)(n+1)}$. Hence $K
\subseteq \mu^{i-1}K^{i-1}I^{s-(i-1)(n+1)} : f$ so that
$\mu^{i-1}K^{i-1}I^{s-(i-1)(n+1)} : f = \m = L' = I +
L'$. For the rest of the proof, we may assume that if $v \in
G(I^{s-(i-1)(n+1)} : \mu u)$, then $v \nmid g$.

Now, let $K^{i-1}=(g_1,\dots, g_k,\dots, g_r)$ with $g_j \mid
g$ for $ j=1,\dots, k$. Suppose $g=l_jg_j$ for $j=1, \dots, k$.
Note that  
\[
  \mu^{i-1}K^{i-1}I^{s-(i-1)(n+1)}:f=\sum_{j=1}^{k}I^{s-(i-1)(n+1)}:\mu
  l_ju+\sum_{j=k+1}^{r}g_jI^{s-(i-1)(n+1)}:\mu gu.\]
We claim that for $k+1\leq j\leq r$, $g_jI^{s-(i-1)(n+1)}:\mu
gu\subset \sum_{j=1}^{k}I^{s-(i-1)(n+1)}:\mu l_ju$. Suppose that
$\gcd(g_j,g)=h_j$. Write $g_j=h_jg_j'$ and $g=h_jg'$. 
Now, let $a$ be
a monomial such that $a\mu g'u\in I^{s-(i-1)(n+1)}$. 
Hence $ag' \in
I^{s-(i-1)(n+1)} : \mu u$ and this colon ideal is generated in at most
degree $2$, where the degree $2$ generators are either edges or even
connections. Then there exists a monomial generator $v$ of
$I^{s-(i-1)(n+1)} : \mu u$  dividing $ag'$. 
If $a \mu u \in I^{s-(i-1)(n+1)}$, then we are through. 
Assume that $a\mu u\notin I^{s-(i-1)(n+1)}$. If $v \mid a$, then $v
\mu u$ and hence $a
\mu u$ belongs to $I^{s-(i-1)(n+1)}$ which is a contradiction to our
assumption. Hence $v \nmid a$. Also, $v \nmid g'$ (since $v \nmid g$).
Hence we may write $v=l_jv'$ such that $l_j\mid g'$
and $v'\mid a$. This implies that $v'\in I^{s-(i-1)(n+1)}:l_j\mu u$,
and hence $a\in I^{s-(i-1)(n+1)}:l_j\mu u$.
Therefore $a\in
\sum_{j=1}^{k}I^{s-(i-1)(n+1)}:\mu l_ju$ which proves the claim. 

Now we claim that $I^{s-(i-1)(n+1)}:\mu l_ju = I + I' + L_j$, where $I'$ is
the ideal generated by the even connections with respect to $\frac{\mu u}{x_a}$ for all $a$. Let $v\in I^{s-(i-1)(n+1)}:\mu l_ju$. Then $vx_al_j\in I^{s-(i-1)(n+1)}:\frac{\mu u}{x_a}$. 
As  $  I^{s-(i-1)(n+1)}:\frac{\mu u}{x_a}$ is generated by edges and
even connections with respect to $\frac{\mu u}{x_a}$, there exists
$w = w_1w_2$ which is an edge or an even connection with respect to $\frac{\mu
u}{x_a}$ such that $w \mid vx_al_j$. If $w\mid v$, then we are done.
If $w \mid x_al_j$, then this implies that $\mu l_j u\in I^{s-(i-1)(n+1)}$, and hence $f\in
\mu^{i-1}K^{i-1}I^{s-(i-1)(n+1)}$ which is a contradiction. Now, let
$w\nmid v$ and $w\nmid x_al_j$. We may assume that $w_1 \mid v$ and
$w_2 \mid x_al_j$. If $w_2 = x_a$, then $w_2 \in N_G(C_n) = L$. If
$w_2 = l_j$, then $w_1 \in N_G(M) \subset L'$, by Remark
\ref{remark}(i). Hence, In either case, $w\in L'$. Hence we get that $
\mu^{i-1}K^{i-1}I^{s-(i-1)(n+1)}:f=I + L',$ where $L'=\sum_{j=1}^k L_j$. Now, Using Lemma \ref{leafOrderLem}, we know that $I'\subset L'$ which completes the proof.
%
\end{proof}

In the process of understanding colon with symbolic power, in a
step-by-step manner, we now study the colon with respect to the
partial sums in the decomposition of symbolic powers.
\begin{lemma}\label{ordLem}
Let $G, \mu, K, L$ be as defined in the beginning of the section.
Assume that $z_r$ is not part of any cycle for all $r = 1, \ldots,
m$.  Let $I = I(G)$  and for $1 \leq i \leq \lfloor \frac{s}{n+1}
\rfloor+1$, set
$I_{i-1}=\sum\limits_{t=0}^{i-1}\mu^tK^tI^{s-t(n+1)}$. Then there exists
an ordering of $G\left(\mu^iK^iI^{s-i(n+1)}\right)=\{u_1,\dots, u_r\}$
such that for all $j = 0, \ldots, r-1$, $$\left(I_{i-1}+( u_1,\dots, u_j) \right):
u_{j+1}=I + L'',$$ where $L''$ is an ideal containing $L$ and generated
by a subset of variables.
\end{lemma}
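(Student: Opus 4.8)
The idea is to combine the colon computation from Lemma \ref{colonLemma} with the ordering-with-variable-colons mechanism of Lemma \ref{orderLem}, applied inside the single graded piece $\mu^iK^iI^{s-i(n+1)}$. First I would fix the ordering on $G(I^{s-i(n+1)})$ coming from Lemma \ref{leafOrderLem} (the edgelex order built from a leaf-order on the forest $G\setminus N_G(C_{2n+1})$), and then transport it to an ordering on $G(\mu^iK^iI^{s-i(n+1)})$: write each minimal generator as $\mu^i g u$ with $g\in G(K^i)$ and $u\in G(I^{s-i(n+1)})$ in a maximal such expression, and order first by $u$ (edgelex) and then by $g$ (lex in the $z$-variables), say. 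Call this list $u_1>u_2>\cdots>u_r$.

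Next, for each $j$ I must compute $(I_{i-1}+(u_1,\dots,u_j)):u_{j+1}$ and show it has the asserted shape $I+L''$. Split the colon as
\[
\bigl(I_{i-1}:u_{j+1}\bigr)+\sum_{t\le j}\bigl((u_t):u_{j+1}\bigr).
\]
For the first summand, note $I_{i-1}=\sum_{t<i}\mu^tK^tI^{s-t(n+1)}$; since $u_{j+1}\in\mu^iK^iI^{s-i(n+1)}$, and $\mu L\subseteq I^{n+1}$ forces the lower-index terms to absorb multiples, the dominant contribution is $\mu^{i-1}K^{i-1}I^{s-(i-1)(n+1)}:u_{j+1}$, which by Lemma \ref{colonLemma} equals $I+L'$ with $L\subseteq L'$ generated by variables (here I use the hypothesis $u_{j+1}\notin I_{i-1}$, which holds because $u_{j+1}$ is a \emph{minimal} generator of the sum — if it lay in $I_{i-1}$ it would not appear in a minimal generating set after we reduce, so I should phrase the statement for minimal generators not already in $I_{i-1}$, matching the hypothesis of Lemma \ref{colonLemma}). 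I also need to check the still-lower terms $\mu^tK^tI^{s-t(n+1)}:u_{j+1}$ for $t<i-1$ only enlarge this by further variables or by elements of $I$, which follows by iterating the same $\mu L\subseteq I^{n+1}$ argument.

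For the second summand $\sum_{t\le j}((u_t):u_{j+1})$, I would argue exactly as in Lemma \ref{orderLem}: each $(u_t):u_{j+1}$ is either already contained in $I^{s-i(n+1)+1}:u_{j+1}\subseteq I+L'$ (absorbed into the $I$ part, since an extra edge pushes $u$ into the next power), or it is controlled by a variable-generated colon $(u_{t'}:u_{j+1})=(z_k)$ coming from an even-connection between two $z$-vertices via Lemma \ref{leafOrderLem}, or from the $K$-part it is a $z$-variable directly; in all cases it contributes only variables, which we throw into $L''$. Finally set $L''=L'+(\text{all these extra variables})$; it contains $L$ and is generated by a subset of the variables, and $I+L''$ is exactly the colon. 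The main obstacle I anticipate is bookkeeping the interaction between the $\mu^iK^i$ prefix and the even-connection structure — specifically verifying that when $u_{j+1}=\mu^i g u$ and an even connection in $u$ (or $\mu u/x_a$) produces an edge $w_1w_2$ with one endpoint dividing $g$ or dividing $\mu$, that endpoint's complement is forced into $L$ (via Remark \ref{remark}) rather than producing a genuinely new degree-$2$ generator; this is where Remark \ref{remark}(i),(ii) and the "none of the $z_i$ lies on a cycle" hypothesis are essential, and I would isolate it as the one place requiring careful case analysis before concluding.
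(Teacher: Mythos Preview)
Your proposal is correct and follows essentially the same route as the paper's proof: split the colon into the $I_{i-1}$ part (handled via Lemma~\ref{colonLemma}, after reducing the terms with $t<i-1$ to the $t=i-1$ case) and the $(u_1,\dots,u_j)$ part (handled via Lemma~\ref{orderLem}), then absorb the even-connection generators of $I^{s-i(n+1)+1}:u$ into variables using Theorem~\ref{evenConThm} together with Lemma~\ref{leafOrderLem}. The only place where the paper is more explicit than your sketch is the reduction of the $t<i-1$ summands: rather than simply ``iterating $\mu L\subseteq I^{n+1}$,'' the paper shows (as in the proof of Lemma~\ref{colonLemma}) that $K^tI^{s-t(n+1)}:\mu^{i-t}gu$ is generated in degree at most~$2$ and then invokes Remark~\ref{remark} to push each linear generator into $\mu^{i-1}K^{i-1}I^{s-(i-1)(n+1)}:\mu^igu$ --- but this is precisely the ``careful case analysis'' you already flagged as the main obstacle.
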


\begin{proof} Let $u_{j+1}= \mu^ifu$, where $f \in G(K^i)$ and $u \in
G(I^{s-i(n+1)})$. In order to prove the assertion we claim that for a
fixed $i$ and $t<i-1$, if $\mu^igu\notin \mu^tK^tI^{s-i(n+1)}$, then
$$\mu^tK^tI^{s-t(n+1)}:\mu^igu\subset
\mu^{i-1}K^{i-1}I^{s-(i-1)(n+1)}:\mu^igu.$$
We first consider the linear part of the left hand side colon ideal
and show that it is contained in the right hand side. Note that 
\[K^t I^{s-t(n+1)} : \mu^{i-t}gu   = \sum_{j = 1}^k I^{s-t(n+1)} :
\mu^{i-t}\frac{g}{g_j} u + \sum_{j = k+1}^r \frac{g_j}{\gcd(g, g_j)}I^{s-t(n+1)} :
\mu^{i-t}\frac{g}{\gcd(g,g_j)} u,\] 
where $g_j \in K^t$ is a
divisor of $g$ for $1 \leq j \leq k$ and for $k+1 \leq j \leq r$, $g_j
\in K^t$ that does not divide $g$. As in the case of proof Lemma
\ref{colonLemma}, it can be shown that the second term in the above
summation is contained in the first. Hence, to prove the assertion, it
is enough to consider the first summation.

First of all, note that $\mu^{i-t}u \in
I^{s-t(n+1)-\left[\frac{i-t+1}{2}\right]}$. Again, as the proof of
Lemma \ref{colonLemma}, one can see that this ideal is generated in
degree at most $2$, with the degree $2$ part generated by some of the
edges and even connections. Hence, if $l\in V(G)$ is such that $l\in
K^tI^{s-t(n+1)-\left[\frac{i-t-1}{2}\right]}:\mu^{i-t}gu$, then there
exists $l'\in V(C_{2n+1})$ or $l'\in \supp(g)$ such that $ll'\in
E(G)$ or an even connection. By Remark \ref{remark}, we get that
$l\in\mu^{i-1}K^{i-1}I^{s-(i-1)(n+1)}:\mu^igu.$ Now, note that $u\in
I^{s-i(n+1)}$ and $\mu^{i-t}\in
I^{(i-t)(n+1)-\left[\frac{i-t+1}{2}\right]}$. Hence by Lemma 4.3, we
know that
$K^tI^{s-t(n+1)-\left[\frac{i-t-1}{2}\right]}:\mu^{i-t}gu=I+L_t'$,
where $L_t'$ is generated by a set of variables and contains $L$. This
implies that $\mu^tK^tI^{s-t(n+1)}:\mu^igu\subset
I+L'=\mu^{i-1}K^{i-1}I^{s-(i-1)(n+1)}:\mu^igu.$ This proves the claim.

Thus by Lemma  \ref{colonLemma}, we
have $I_{i-1}:u_{j+1}= (I^{s-i(n+1)+1}:u)+L'$. On
$G(\mu^iK^iI^{s-i(n+1)}),$ define an ordering induced by the ordering in
Lemma \ref{leafOrderLem} and Definition \ref{ordering}.
By Lemma \ref{orderLem}, there exists a largest ideal generated by a
subset of variables, say $L_2$, such that $L_2 \subset (u_1, \ldots,
u_j) : u_{j+1}$ and $( u_1,\dots, u_j): u_{j+1}\subset
I^{s-i(n+1)+1}:u +L_2$. Take $L'' = L' + L_2$. Then it follows from 
Theorem \ref{evenConThm} and Lemma \ref{leafOrderLem} that
$\left(I_{i-1}+( u_1,\dots, u_j) \right):
u_{j+1}\subseteq I + L''.$ Since $L_2 \subset (u_1, \ldots, u_j) :
u_{j+1}$ and $I_{i-1} : u_{j+1} = (I^{s - i(n+1)+1} : u) + L' \supset
I + L'$, we get the reverse containment as well.
\end{proof}

\begin{remark}\label{regRem}{\rm
Let $H$ be the induced subgraph on $V(G) \setminus N_G(C_{2n+1})$. By Lemma \ref{ordLem}, we
know that $$(I_{i-1}+( u_1,\dots, u_{j})):u_{j+1}=I+L'',$$
where $L\subset L''\subset \mathfrak{m}$. This implies that $I+L''$
corresponds to an induced subgraph of $H$. Therefore, we get
$$\reg\left(\dfrac{S}{(I_{i-1}+( u_1,\dots,
u_{j})):u_{j+1}}\right)\leq \reg\left(\dfrac{S}{I(H)}\right) = \nu(H).$$
}\end{remark}

\begin{proposition}\label{regLem}
Let the notation be as in Lemma \ref{ordLem} and $H$ denote the
induced subgraph on $V(G) \setminus N_G(C_{2n+1})$.
If $\nu(G)-\nu(H)\geq 3$, then for $1\leq i\leq \lfloor \frac{s}{n+1}
\rfloor+ 1$,  $\reg(I^s)=\reg(I_{i-1})$.
\end{proposition}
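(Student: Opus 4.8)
The goal is to show $\reg(I_{i-1}) = \reg(I^s)$ for each $i$ in the stated range, and I would prove this by induction on $i$. The base case $i=1$ gives $I_0 = I^s$, for which the equality is trivial. For the inductive step, I would use the short exact sequence relating $I_{i-1}$ to $I_{i-2}$. More precisely, since $I_{i-1} = I_{i-2} + \mu^{i-1}K^{i-1}I^{s-(i-1)(n+1)}$, there is a short exact sequence
\begin{equation*}
0 \longrightarrow \frac{S}{I_{i-2} : \mu^{i-1}K^{i-1}I^{s-(i-1)(n+1)}}(-d) \longrightarrow \frac{S}{I_{i-2}} \oplus \frac{S}{\mu^{i-1}K^{i-1}I^{s-(i-1)(n+1)}}(-?) \longrightarrow \frac{S}{I_{i-1}} \longrightarrow 0,
\end{equation*}
though in practice it is cleaner to break the passage from $I_{i-2}$ to $I_{i-1}$ into one-generator steps: order the generators $u_1, \ldots, u_r$ of $\mu^{i-1}K^{i-1}I^{s-(i-1)(n+1)}$ as in Lemma \ref{ordLem}, and for each $j$ use the short exact sequence
\begin{equation*}
0 \longrightarrow \frac{S}{(I_{i-2} + (u_1,\ldots,u_j)) : u_{j+1}}(-\deg u_{j+1}) \longrightarrow \frac{S}{I_{i-2} + (u_1,\ldots,u_j)} \longrightarrow \frac{S}{I_{i-2} + (u_1,\ldots,u_{j+1})} \longrightarrow 0.
\end{equation*}

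**Key steps.** First, I would record that $\deg u_{j+1} = 2(s-(i-1)(n+1)) + (i-1)(2n+1) + (i-1) = 2s - (i-1)$ for every generator $u_{j+1}$ of $\mu^{i-1}K^{i-1}I^{s-(i-1)(n+1)}$ — these all have the same degree, which is what makes the telescoping work. Second, by Lemma \ref{ordLem} the colon ideal $(I_{i-2} + (u_1,\ldots,u_j)) : u_{j+1}$ equals $I + L''$ with $L \subseteq L'' \subseteq \m$, and by Remark \ref{regRem} this corresponds to an induced subgraph of $H$, so $\reg(S/((I_{i-2}+(u_1,\ldots,u_j)):u_{j+1})) \leq \nu(H)$. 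Consequently the first term of each short exact sequence has regularity at most $\nu(H) + \deg u_{j+1} = \nu(H) + 2s - (i-1) \leq \nu(H) + 2s - 1$ (since $i \geq 2$). Third, I would invoke $\reg(S/I^{(s)}) \geq 2s + \nu(G) - 2$ from \cite[Theorem 4.6]{GHRS}; combined with $I_{i-1} \subseteq I^{(s)}$ one does not immediately get a lower bound on $\reg(S/I_{i-1})$, so instead I would show by a parallel induction that $\reg(S/I_{i-1}) = \reg(S/I^s) = 2s + \nu(G) - 2$ and that this number strictly exceeds the regularity of every "error term" $\reg(S/((I_{i-2}+(u_1,\ldots,u_j)):u_{j+1})(-\deg u_{j+1}))$, because $\nu(G) - \nu(H) \geq 3 > 1$ forces $2s + \nu(G) - 2 > \nu(H) + 2s - 1$. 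By the standard fact that in a short exact sequence $0 \to A \to B \to C \to 0$ one has $\reg B = \reg C$ whenever $\reg A < \reg C$ (applied to the modules shifted appropriately, using $\reg(S/\mathfrak{m}^{2s}) = 2s-1 < 2s+\nu(G)-2$ in the first step and the inductive hypothesis $\reg(S/I_{i-2}) = 2s+\nu(G)-2$ thereafter), each one-generator step preserves the regularity. Telescoping through $j = 0, \ldots, r-1$ then gives $\reg(S/I_{i-1}) = \reg(S/(I_{i-2} + \mu^{i-1}K^{i-1}I^{s-(i-1)(n+1)} \cap \cdots)) = \reg(S/I_{i-2}) = 2s + \nu(G) - 2$, completing the induction.

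**Main obstacle.** The delicate point is the interaction between the two numbers $\nu(H)$ and $\nu(G)$: I must verify carefully that $\reg(S/I^s) = 2s + \nu(G) - 2$ (this is where the hypothesis that $G$ is a clique sum of $C_{2n+1}$ with bipartite graphs, together with results on regularity of powers of edge ideals, gets used) and then confirm the strict inequality $\reg(S/I^s) > \nu(H) + \deg u_{j+1}$ holds uniformly in $i$ and $j$; the condition $\nu(G) - \nu(H) \geq 3$ is exactly calibrated so that $2s + \nu(G) - 2 \geq 2s + \nu(H) + 1 > 2s + \nu(H) - (i-1) = \nu(H) + \deg u_{j+1}$ for all $i \geq 2$. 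A secondary subtlety is checking that the ordering from Lemma \ref{ordLem} is genuinely available at every stage of the telescoping (i.e. that the colon description $I + L''$ with $L'' \subseteq \m$ persists), but this is precisely the content of Lemma \ref{ordLem} and Remark \ref{regRem}, so it can be cited directly. I expect the bookkeeping of degree shifts to be the only real source of friction.
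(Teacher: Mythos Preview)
Your approach is the same as the paper's: induct on $i$, pass from $I_{i-2}$ to $I_{i-1}$ by adding the generators $u_1,\dots,u_r$ of $\mu^{i-1}K^{i-1}I^{s-(i-1)(n+1)}$ one at a time, and control each step via the short exact sequence together with Lemma~\ref{ordLem} and Remark~\ref{regRem}. Two corrections are needed in your bookkeeping.

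First, your degree computation contains an arithmetic slip. For $u\in G(\mu^{i-1}K^{i-1}I^{s-(i-1)(n+1)})$ one has
\[
\deg u \;=\; (i-1)(2n+1) + (i-1) + 2\bigl(s-(i-1)(n+1)\bigr)
\;=\; 2s + (i-1)\bigl[(2n+1)+1-2(n+1)\bigr]
\;=\; 2s,
\]
not $2s-(i-1)$. So every generator has degree exactly $2s$, and the shift in each short exact sequence is $(-2s)$, just as in the paper.

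Second, and more importantly, you do not need (and should not try) to establish the \emph{equality} $\reg(S/I^s)=2s+\nu(G)-2$. For the class of graphs at hand this exact value is not asserted anywhere in the paper, and proving it would be a separate task. All that is required is the \emph{lower bound} $\reg(S/I^s)\geq 2s+\nu(G)-2$, which the paper takes from \cite[Theorem~4.5]{BHT} (a general bound for powers of edge ideals). With the correct shift $-2s$, the first term of each short exact sequence has regularity at most $2s+\nu(H)\leq 2s+\nu(G)-3$, and the hypothesis $\nu(G)-\nu(H)\geq 3$ gives precisely
\[
2s+\nu(H)\;\leq\;2s+\nu(G)-3\;<\;2s+\nu(G)-2\;\leq\;\reg(S/I^s)\;=\;\reg\bigl(S/(I_{i-2}+(u_1,\dots,u_j))\bigr),
\]
the last equality by the (inner and outer) inductive hypotheses. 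The strict inequality $\reg A<\reg B$ then forces $\reg B=\reg C$ in each step. Your reference to \cite[Theorem~4.6]{GHRS} gives a bound on $\reg(S/I^{(s)})$, which, as you yourself noticed, does not directly bound $\reg(S/I_{i-1})$; switching to the bound on $\reg(S/I^s)$ removes that difficulty entirely and makes the ``parallel induction'' you sketch unnecessary.
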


\begin{proof}
Result is true for $i=1$. Assume that it is true for $i-1$. Using
Lemma \ref{ordLem}, write $G(\mu^iK^iI^{s-i(n+1)})=\{u_1,\dots, u_r\}$
such that for all $j = 0, \ldots, r-1$, $$I_i = I_{i-1}+( u_1,\dots,
u_{r}) \text{ and } (I_{i-1}+( u_1,\dots, u_{j})):u_{j+1}=I+L''.$$
	
For $j=0$, consider the following exact sequence
$$ 0\longrightarrow
\dfrac{S}{I+L''}(-2s)\overset{\cdot u_1}{\longrightarrow}
\dfrac{S}{I_{i-1}}\longrightarrow\dfrac{S}{I_{i-1}+( u_1)}\longrightarrow 0.$$

From Remark \ref{regRem}, we know that
$$\reg\left(\dfrac{S}{I+L''}(-2s)\right)\leq 2s+\nu(H)\leq
2s+\nu(G)-3<\reg\left(\dfrac{S}{I^s}\right)=\reg\left(\dfrac{S}{I_{i-1}}\right),$$
where the third inequality follows from \cite[Theorem 4.5]{BHT}.
Hence
$\reg\left(\dfrac{S}{I_{i-1}}\right)=\reg\left(\dfrac{S}{I_{i-1}+(
u_1)}\right)$. Assume by induction on $j$ that $\reg(I_{i-1} + (u_1,
\ldots, u_{j-1})) = \reg(I^s)$. Since $(I_{i-1} + (u_1, \ldots, u_j))
: u_{j+1} = I + L''$, we get the desired equality from the short exact
sequence:
$$ 0\longrightarrow
\dfrac{S}{I+L''}(-2s)\overset{\cdot u_{j+1}}{\longrightarrow}
\dfrac{S}{I_{i-1}+(u_1, \ldots,
u_j)}\longrightarrow\dfrac{S}{I_{i-1}+(u_1, \ldots, u_{j+1} )}\longrightarrow 0.$$

\end{proof}
We are now ready to prove our second main theorem.
\begin{theorem}\label{mainTheorem}
Let $G$ be a graph obtained by taking clique sum of a $C_{2n+1}$ and
some bipartite graphs. Let $H$ be an
induced subgraph of $G$ on vertices $V\setminus\bigcup_{x\in
V(C_{2n+1})}N_G(x)$. Assume that none of the vertices of $H$ is part
of any cycle in $G$. If $\nu(G)-\nu(H)\geq 3$, then
$\reg\left(I^{(s)}\right)=\reg\left(I^s\right)$.
\end{theorem}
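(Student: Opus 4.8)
The plan is to run the same short exact sequence argument used in the proof of Theorem \ref{Theorem}, but now feeding it the finer information obtained in Proposition \ref{regLem}. First I would recall the decomposition $I^{(s)} = \sum_{i=0}^{k} \mu^i K^i I^{s-i(n+1)} + (\text{terms in } L^t\mu^t)$; more precisely, by Lemma \ref{implLemma} together with the refined intersection lemma, $I^{(s)} \cap \m^{2s} = \sum_{i=0}^{k} \mu^i K^i I^{s-i(n+1)} = I_k$ in the notation of Lemma \ref{ordLem}, where $k = \lfloor s/(n+1)\rfloor$. Then I would invoke Proposition \ref{regLem}, whose hypothesis $\nu(G)-\nu(H)\geq 3$ is exactly the hypothesis of the theorem, to conclude $\reg(S/I_k) = \reg(S/I^s)$, i.e.\ $\reg\big(S/(I^{(s)} \cap \m^{2s})\big) = \reg(S/I^s)$.

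Next I would set up the short exact sequence
\begin{eqnarray*}
0 \longrightarrow \dfrac{S}{I^{(s)} \cap \m^{2s}} \longrightarrow \dfrac{S}{I^{(s)}} \oplus \dfrac{S}{\m^{2s}} \longrightarrow \dfrac{S}{I^{(s)} + \m^{2s}} \longrightarrow 0.
\end{eqnarray*}
As in the proof of Theorem \ref{Theorem}, the Artinian module $S/(I^{(s)}+\m^{2s})$ has regularity $2s-1$ (since $I^{(s)}$ carries a generator of degree $2s$, namely $\mu^{k}$ times something, so $[S/(I^{(s)}+\m^{2s})]_{2s}=0$ while $[\cdot]_{2s-1}\neq 0$), and $\reg(S/\m^{2s}) = 2s-1$. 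On the other hand, by \cite[Theorem 4.6]{GHRS} we have $\reg(S/I^{(s)}) \geq 2s + \nu(G) - 2$, and by \cite[Theorem 4.5]{BHT} (or \cite[Theorem 4.5]{BHT} as used in Proposition \ref{regLem}) $\reg(S/I^s) = 2s + \nu(G) - 2$ as well — so both $\reg(S/I^{(s)})$ and $\reg(S/(I^{(s)}\cap\m^{2s})) = \reg(S/I^s)$ are at least $2s+\nu(G)-2 \geq 2s$, strictly bigger than $\reg(S/(I^{(s)}+\m^{2s})) = 2s-1$ once $\nu(G)\geq 2$. (The small cases $\nu(G)=1$ should be handled separately exactly as in Theorem \ref{Theorem}, where $G$ is co-chordal.) Feeding these inequalities into the long exact sequence of $\mathrm{Ext}$ (equivalently, comparing graded Betti numbers along the short exact sequence), the regularity of the middle term equals the maximum of the outer two in the relevant homological degrees, and the strict gap forces $\reg(S/I^{(s)}) = \reg(S/(I^{(s)}\cap\m^{2s})) = \reg(S/I^s)$, hence $\reg(I^{(s)}) = \reg(I^s)$.

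The main obstacle — and the reason the bulk of Section 4 is devoted to it — is establishing $\reg(S/(I^{(s)}\cap\m^{2s})) = \reg(S/I^s)$, i.e.\ Proposition \ref{regLem}, which in turn rests on the delicate colon-ideal computation in Lemma \ref{ordLem} and Lemma \ref{colonLemma}: one must show that at every stage of the filtration of $I_k$ by the generators $u_1, \dots, u_r$ of each piece $\mu^i K^i I^{s-i(n+1)}$, the relevant colon ideal is of the form $I + L''$ with $L \subseteq L'' \subseteq \m$, so that it corresponds to an induced subgraph of $H$ and contributes regularity at most $\nu(H) \leq \nu(G) - 3 < \reg(S/I^s) - 2s$. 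The hypothesis that no vertex of $H$ lies on a cycle is precisely what makes the even-connection analysis (Lemma \ref{leafOrderLem}) and the forest structure of $V(G)\setminus N_G(C_{2n+1})$ available; without it the colon ideals need not be so simple. Once Proposition \ref{regLem} is in hand, the theorem itself is the short, formal short-exact-sequence argument sketched above, essentially identical to the endgame of Theorem \ref{Theorem}.
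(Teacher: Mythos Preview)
Your proposal is correct and follows essentially the same route as the paper: identify $I^{(s)}\cap\m^{2s}$ with $I_k$, feed this into the short exact sequence $0\to S/I_k\to S/I^{(s)}\oplus S/\m^{2s}\to S/(I^{(s)}+\m^{2s})\to 0$, use the regularity gap $\reg(S/I^{(s)})\geq 2s+\nu(G)-2>2s-1=\reg(S/(I^{(s)}+\m^{2s}))$ to conclude $\reg(S/I_k)=\reg(S/I^{(s)})$, and then invoke Proposition~\ref{regLem} to get $\reg(S/I_k)=\reg(S/I^s)$. One small simplification: you need not worry about the case $\nu(G)=1$, since the hypothesis $\nu(G)-\nu(H)\geq 3$ already forces $\nu(G)\geq 3$; the paper simply writes ``Since $\nu(G)\geq 2$'' for this reason.
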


\begin{proof} Let $s \geq 1$ and $k = \lfloor \frac{s}{n+1} \rfloor$.
Consider the following exact sequence
$$0\longrightarrow \dfrac{S}{I_k}\longrightarrow
\dfrac{S}{I^{(s)}} \oplus \dfrac{S}{\mathfrak{m}^{2s}} \longrightarrow
\dfrac{S}{I^{(s)}+ \mathfrak{m}^{2s}} \longrightarrow0,$$ where $I_k =
\sum_{t=0}^k \mu^tK^tI^{s-t(n+1)}$.
Since $\nu(G)\geq 2$, we have
$\reg\left(\dfrac{S}{I^{(s)}}\right)>\reg\left(\dfrac{S}{\mathfrak{m}^{2s}}\right)=\reg\left(\dfrac{S}{I^{(s)}+\mathfrak{m}^{2s}}\right)$.
Hence
$\reg\left(\dfrac{S}{I_k}\right)=\reg\left(\dfrac{S}{I^{(s)}}\right)$.
Since $\nu(G)-\nu(H)\geq 3$,  by Proposition \ref{regLem}, we get that $\reg(I^s)=\reg(I^{(s)}).$
\end{proof}

\begin{remark}{\rm
If the odd cycle in $G$ is of length at least $9$, then the condition 
$\nu(G) - \nu(H) \geq 3$ is always satisfied. 
\begin{enumerate}
  \item If the unique odd cycle in $G$ is of length $7$, then the
	hypothesis of Theorem \ref{mainTheorem} is satisfied if a $P_3$ is
	attached to $C_7$. 
  \item If the unique odd cycle in $G$ is of length $5$, then the
	hypothesis of Theorem \ref{mainTheorem} is satisfied if either two
	$P_3$'s are attached to a single vertex or a $P_3$ and a $P_2$ are
	attached to adjacent vertices (see figure below).
  \item If the unique odd cycle in $G$ is of length $3$, then the
	hypothesis of Theorem \ref{mainTheorem} is satisfied if either two
	$P_3$'s are attached to a single vertex or on each vertex of
	$C_3$ a $P_3$ is attached (see figure below).
  \item It may also be noted that the class of graphs considered in
	Theorem \ref{mainTheorem} is not a subset of unicyclic graphs. It
	also includes graphs which are obtained by taking clique sum of
	copies of $C_4$ along the edges of an odd cycle (see figure
	below).
\end{enumerate}
}\end{remark}

We illustrate with pictures, some of the graphs for which the
regularity of the symbolic powers of their edge ideals are same as
that of their regular powers.

\vskip 4mm
\begin{center}

\begin{tikzpicture}[scale=0.8]
\draw (1,1)-- (3,1);
\draw (2,2)-- (1,1);
\draw (2,2)-- (3,1);
\draw (2,2)-- (1,3);
\draw (2,2)-- (3,3);
\draw (6,2)-- (5,1);
\draw (7,1)-- (5,1);
\draw (6,2)-- (7,1);
\draw (5,1)-- (4,0);
\draw (6,2)-- (6,3.5);
\draw (7,1)-- (8,0);
\draw (1,4)-- (3,4);
\draw (3,4)-- (3,5);
\draw (3,5)-- (2,6);
\draw (2,6)-- (1,5);
\draw (1,5)-- (1,4);
\draw (2,6)-- (1,7);
\draw (2,6)-- (3,7);
\draw (5,4)-- (7,4);
\draw (7,4)-- (7,5);
\draw (5,4)-- (5,5);
\draw (5,5)-- (6,6);
\draw (6,6)-- (7,5);
\draw (6,6)-- (7,7);
\draw (7,5)-- (8,5);
\draw (10,4)-- (9,3);
\draw (9,2)-- (9,3);
\draw (10,4)-- (11,3);
\draw (11,3)-- (11,2);
\draw (11,1)-- (11,2);
\draw (9,2)-- (9,1);
\draw (9,1)-- (11,1);
\draw (11,3)-- (12,3);
\draw (12,3)-- (12,2);
\draw (12,2)-- (11,2);
\draw (10,4)-- (11,5);
\draw (10,4)-- (9,5);
\draw (12,3)-- (13,4);
\draw (12,2)-- (13,1);
\begin{scriptsize}
  \draw [fill] (1,1) circle (1pt);
\draw [fill] (3,1) circle (1pt);
\draw [fill] (2,2) circle (1pt);
\draw [fill] (1,3) circle (1pt);
\draw [fill] (1.525,2.475) circle (1pt);
\draw [fill] (3,3) circle (1pt);
\draw [fill] (2.535,2.535) circle (1pt);
\draw [fill] (6,2) circle (1pt);
\draw [fill] (5,1) circle (1pt);
\draw [fill] (7,1) circle (1pt);
\draw [fill] (4,0) circle (1pt);
\draw [fill] (6.01,3.46) circle (1pt);
\draw [fill] (8,0) circle (1pt);
\draw [fill] (6.004793826523432,2.6998986724210727) circle (1pt);
\draw [fill] (4.515,0.515) circle (1pt);
\draw [fill] (7.515,0.485) circle (1pt);
\draw [fill] (1,4) circle (1pt);
\draw [fill] (3,4) circle (1pt);
\draw [fill] (3,5) circle (1pt);
\draw [fill] (2,6) circle (1pt);
\draw [fill] (1,5) circle (1pt);
\draw [fill] (1,7) circle (1pt);
\draw [fill] (3,7) circle (1pt);
\draw [fill] (1.465,6.535) circle (1pt);
\draw [fill] (2.535,6.535) circle (1pt);
\draw [fill] (5,4) circle (1pt);
\draw [fill] (7,4) circle (1pt);
\draw [fill] (7,5) circle (1pt);
\draw [fill] (5,5) circle (1pt);
\draw [fill] (6,6) circle (1pt);
\draw [fill] (7,7) circle (1pt);
\draw [fill] (8,5) circle (1pt);
\draw [fill] (6.505,6.505) circle (1pt);
\draw [fill] (10,4) circle (1pt);
\draw [fill] (9,3) circle (1pt);
\draw [fill] (9,2) circle (1pt);
\draw [fill] (11,3) circle (1pt);
\draw [fill] (11,2) circle (1pt);
\draw [fill] (11,1) circle (1pt);
\draw [fill] (9,1) circle (1pt);
\draw [fill] (12,3) circle (1pt);
\draw [fill] (12,2) circle (1pt);
\draw [fill] (11,5) circle (1pt);
\draw [fill] (9,5) circle (1pt);
\draw [fill] (13,4) circle (1pt);
\draw [fill] (13,1) circle (1pt);
\draw [fill] (12.525,3.525) circle (1pt);
\draw [fill] (12.495,1.505) circle (1pt);
\draw [fill] (10.495,4.505) circle (1pt);
\end{scriptsize}
\end{tikzpicture}
\end{center}

\renewcommand{\bibname}{References}
\bibliographystyle{plain}  
\bibliography{refs_reg}
\end{document}